\patchcmd{\@setaddresses}{\indent}{\noindent}{}{}
\patchcmd{\@setaddresses}{\indent}{\noindent}{}{}
\patchcmd{\@setaddresses}{\indent}{\noindent}{}{}
\patchcmd{\@setaddresses}{\indent}{\noindent}{}{}
\DeclareMathSizes{\@xpt}{\@xpt}{6}{5}
\def\namedlabel#1#2{\begingroup
    #2%
    \def\@currentlabel{#2}%
    \phantomsection\label{#1}\endgroup
}
\theoremstyle{plain}
\newtheorem{theorem}{Theorem}[section]
\newtheorem{lemma}[theorem]{Lemma}
\newtheorem{proposition}[theorem]{Proposition}
\newtheorem{corollary}[theorem]{Corollary}
\newtheorem*{theorem*}{Theorem}
\theoremstyle{definition}
\newtheorem{definition}[theorem]{Definition}
\newtheorem{example}[theorem]{Example}
\theoremstyle{remark}
\newtheorem{remark}[theorem]{Remark}
\newtheorem{remarks}[theorem]{Remarks}
\newcommand{\N}{\mathbb{N}}
\newcommand{\Z}{\mathbb{Z}}
\newcommand{\R}{\mathbb{R}}
\newcommand{\K}{\Bbbk}
\newcommand{\op}[1]{#1^{\mathrm{op}}} 
\newcommand{\mf}[1]{\mathfrak{#1}} 
\newcommand{\ms}[1]{\mathsf{#1}}
\newcommand{\gi}[1]{{#1}^{-1}} 
\newcommand{\id}{\mathsf{id}} 
\newcommand{\alg}{\mathsf{Alg}}
\newcommand{\algk}{\alg_{\K}} 
\newcommand{\Calgk}{\mathsf{CAlg}_{\K}} 
\newcommand{\vectk}{\mathsf{Vect}_{\K}} 
\newcommand{\Set}{{\mathsf{Set}}} 
\newcommand{\Top}{{\mathsf{Top}}} 
\newcommand{\Rmod}[1]{\M_{#1}} 
\newcommand{\gparcom}[1]{\mathsf{gPCom}^{#1}} 
\newcommand{\com}[1]{\mathsf{Com}^{#1}} 
\newcommand{\Cov}{\mathsf{Cov}} 
\newcommand{\Com}{\mathsf{Com}}
\newcommand{\Mod}{\mathsf{Mod}}
\newcommand{\Ind}{\mathsf{Ind}}
\newcommand{\cC}{{\mathcal C}}
\newcommand{\cG}{{\mathcal G}}
\newcommand{\cI}{{\mathcal I}}
\newcommand{\cJ}{{\mathcal J}}
\newcommand{\cO}{{\mathcal O}}
\newcommand{\cZ}{{\mathcal Z}}
\newcommand{\M}{\mathsf{Mod}} 
\newcommand{\I}{\mathbb{I}} 
\newcommand{\calpha}{\mf{a}}
\newcommand{\clambda}{\mf{l}}
\newcommand{\crho}{\mf{r}}
\newcommand{\ie}{i.e.~}
\newcommand{\eg}{e.g.~}
\renewcommand{\top}{\mathbf{\uptau}}
\newcommand{\ot}{\otimes}
\newcommand{\bul}{\bullet}
\newbox\pullbackbox
\newbox\pushoutbox
\def\pushout{\copy\pushoutbox}
\definecolor{bostonuniversityred}{rgb}{0.8, 0.0, 0.0}
\title[Globalization for geometric partial comodules]{Globalization for geometric partial comodules}
\author{Paolo Saracco}
\address{D\'epartement de Math\'ematique, Universit\'e Libre de Bruxelles, Boulevard du Triomphe, B-1050 Brussels, Belgium.}
\urladdr{\url{sites.google.com/view/paolo-saracco}}
\urladdr{\url{paolo.saracco.web.ulb.be}}
\email{paolo.saracco@ulb.be}
\author{Joost Vercruysse}
\address{D\'epartement de Math\'ematique, Universit\'e Libre de Bruxelles, Boulevard du Triomphe, B-1050 Brussels, Belgium.}
\urladdr{\url{joost.vercruysse.web.ulb.be}}
\email{joost.vercruysse@ulb.be}
\thanks{PS is a Charg\'e de Recherches of the Fonds de la Recherche Scientifique - FNRS and a member of the National Group for Algebraic and Geometric Structures and their Applications (GNSAGA-INdAM).
JV thanks the FNRS (National Research Fund of the French speaking community in Belgium) for support via the MIS project `Antipode' (Grant F.4502.18) and the FWB (F\'ed\'eration Wallonie-Bruxelles) for support via the ARC project "From algebra to combinatorics, and back". \hfill \\
\hfill \\
This version of the article has been accepted for publication, after peer review. The final publication is available at Elsevier via \href{https://doi.org/10.1016/j.jalgebra.2022.03.013}{doi.org/10.1016/j.jalgebra.2022.03.013}.}
\keywords{Globalization; monoidal category; coalgebra; geometric partial comodule; partial action; partial comodule algebra.}
\subjclass[2010]{16T15, 16W22, 18A40} 
\begin{document}

\begin{abstract}
We discuss globalization for geometric partial comodules in a monoidal category with pushouts and we provide a concrete procedure to construct it, whenever it exists. The mild assumptions required by our approach make it possible to apply it in a number of contexts of interests, recovering and extending numerous {\em ad hoc} globalization constructions from the literature in some cases and providing obstruction for globalization in some other cases.
\end{abstract}

\maketitle

\fancyhf{}
\renewcommand{\headrulewidth}{0pt}
\thispagestyle{fancy}
\cfoot{\smallskip\footnotesize $\begin{gathered}\includegraphics[scale=0.5]{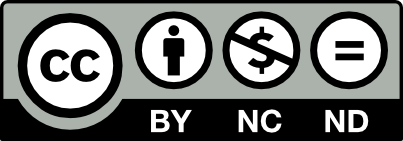}\end{gathered}$ \ \copyright\, 2022. This manuscript version is made available under the \href{https://creativecommons.org/licenses/by-nc-nd/4.0/}{CC-BY-NC-ND 4.0} license.}



\section{Introduction}

The notion of partial action of a group on a set (also known as partial dynamical system) appeared in \cite{Exel1} within the theory of operator algebras as an approach to $C^*$-algebras generated by partial isometries, permitting, in particular, the study of their $K$-theory, ideal structure and representations. 
The point of view of crossed products by partial actions of groups has been enormously successful for classifying $C^*$-algebras 
and in the last few years the investigation of topological and $C^*$-algebraic partial dynamical systems experienced a period of intense activity (for instance, recently, partial coactions of $C^*$-bialgebras and of $C^*$-quantum groups on $C^*$-algebras were introduced and studied in \cite{Kraken}).

At the same time, the study of partial actions and representations from a more algebraic point of view attracted the attention of numerous researchers in the field and soon it became an independent topic of interest in algebra and ring theory, resulting in remarkable applications and theoretic development (for an idea of the impact of partial (co)actions on contemporary Mathematics, we refer the reader to the recent survey \cite{Doku2} and the references therein). In particular, motivated by an extension of classical Galois theory \cite{DokuFerreroPaques}, partial actions entered the realm of Hopf algebras \cite{CaenepeelDeGroot}, \cite{Caenepeel-Janssen}.

One of the relevant questions in the study of partial actions is the problem of the existence and uniqueness of a globalization (also called an enveloping action). 
Any action of a group on a set induces a partial action of the group on any subset by restriction (see Example \ref{ex:induced} below). The other way around, ``globalizing'' a given partial action means to find a (minimal) global action such that the initial partial action can be realized as the restriction of this global one.
The aim of the restriction and globalization procedures is to relate partial and global actions in such a way 
that results can be extended from the global to the more general partial setting and, conversely, general results in the partial case can be used to refine and complete our understanding of global actions. 
In addition, globalizable partial actions play a key role in the development of Galois theory of partial group actions in \cite{DokuFerreroPaques}. 

The study of this problem begun in the context of partial actions of groups on topological spaces in \cite{Abadie} and, independently, \cite{KellendorkLawason}, where it was proved that, up to isomorphism, each partial action can be globalized (see also \cite{Novikov}). 
For the partial actions of a group on a unital associative algebra, 
a criterion for the globalizability was given in \cite[Theorem 4.5]{DokuExel}. This criterion was generalized to the so-called left $s$-unital rings  (i.e.\ rings with left local units) in \cite{DokuDelRioSimon} and it was also used to analyse when a partial action on a semiprime ring is globalizable \cite{BemmFerrero,CortesFerrero}.
In \cite{DokuExelSimon}, a globalization for twisted partial actions was established and in \cite{Ferraro} the problem of globalizability of partial actions on non-necessarily unital rings, algebras and $C^*$-algebras was studied. 
In the theory of partial (co)actions of Hopf algebras, one of the first results obtained was exactly that every partial action of a Hopf algebra on a unital algebra admits a suitable globalization \cite{AlvesBatista2,AlvesBatista}, which however is not necessarily unital. Similar theorems were proved in other contexts such as partial actions of Hopf algebras on $\K$-linear categories \cite{AlvaresBatista}, twisted partial actions of Hopf algebras \cite{AlvesBatistaDokuPaques}, partial modules over a Hopf algebra \cite{AlvesBatistaVercruysse}, partial actions of multiplier Hopf algebras \cite{Fonseca}, partial groupoid actions on rings \cite{BagioPaques}, on $s$-unital rings \cite{BagioPinedo} and, very recently, on $R$-categories \cite{MarinPinedo}.
In this framework as well, having a globalization theorem triggered several new results. For example, in \cite{AlvesBatista} the authors obtained a version of Blattner-Montgomery theorem for the case of partial actions, extending the analogue of the Cohen-Montgomery duality obtained in \cite{Lomp}. 

However, in each of the previous situations there is an {\em ad hoc} construction of the globalization, depending heavily on the nature of the objects carrying the partial action.
In this paper we propose a unified approach to globalization in a categorical setting and we provide a concrete procedure to construct it. 

Our approach relies on the notion of geometric partial comodule recently introduced in \cite{JoostJiawey}. 
Unlike partial actions as described above, which exist only for (topological) groups and Hopf algebras, geometric partial comodules can be defined over any coalgebra in a monoidal category. Hence, their field of applications is much wider and, at the same time, it encompasses classical partial actions, which 
can be recovered by considering a group as a coalgebra in the opposite of the category of sets. Moreover, geometric partial comodules allow us to describe phenomena that are out of the reach of the theory of partial (co)actions in the Hopf algebra framework. For instance, recall that the coordinate algebras of algebraic groups provide classical examples of Hopf algebras, which in turn are the backbone of the algebraic approach to the representation theory of those, in the sense that regular actions of algebraic groups on affine varieties correspond to coactions of the coordinate Hopf algebras on the corresponding coordinate rings. Despite this, it has been shown in \cite{Dual} that a partial coaction in the sense of \cite{Caenepeel-Janssen} of the coordinate Hopf algebra $\cO(G)$ of an algebraic group $G$ on the coordinate ring $\cO(X)$ of an affine space $X$ is always global, unless $X$ is a disjoint union of non-empty subspaces. 

The notion of {\em geometric partial comodules} was proposed in \cite{JoostJiawey} as an alternative to partial (co)actions of Hopf algebras, in order to describe genuine (\eg irreducible) partial actions of algebraic groups from a Hopf-algebraic point of view. 
In view of this purpose, the prefix \emph{geometric} was added, in order to distinguish the latter ones from the \emph{algebraic} ones as in \cite{Caenepeel-Janssen}.
At the same time, however, it turned out that geometric partial comodules allow to approach in a unified way partial actions of groups on sets, partial coactions of Hopf algebras on algebras and partial (co)actions of Hopf algebras on vector spaces (i.e.\ partial (co)representations of Hopf algebras) as well. As a consequence, the question of studying the existence (and uniqueness) of globalization for geometric partial comodules naturally arises as a unifying way to address the issue. The present paper is devoted to deal with this question.

After recalling the main features of the theory of geometric partial comodules over coalgebras in \S\ref{ssec:genparcom}, in \S\ref{ssec:globcov} we recall the procedure to construct a geometric partial comodule from a global comodule $Y$ together with an epimorphism $p:Y\to X$ in the underlying category. The resulting geometric partial comodule $X$ is said to be induced by the pair $(Y,p)$. Moreover, by defining a suitable category of `covers', which are triples $(Y,X,p)$ as above, we show that this construction becomes functorial. 

Our main results are proven in \S\ref{ssec:globgen}, where we also introduce the concept of globalization for geometric partial comodules. In Theorem~\ref{thm:globalization}, definitely the most important of the paper, we provide necessary and sufficient conditions for the existence of the globalization and we exhibit an explicit construction of the latter, whenever it exists. In Corollary \ref{cor:adjunction}, this construction is proven to be functorial and to provide a right adjoint $\cG$ to the fully faithful embedding from global comodules into globalizable partial comodules.
A remarkable fact is that this adjunction provides a splitting of the classical free-forgetful adjunction for global comodules (see Proposition~\ref{prop:adj}), which shows once more how the theory of (geometric) partial comodules provides a refinement of the classical global theory.

Finally, in Theorem~\ref{globcov} we show that the category of globalizable geometric partial comodules is equivalent to the one of minimal proper covers, thus offering a way of concretely describing the globalizable partial comodules among all the geometric partial ones.

These results do not only provide an effective tool to compute the globalization whenever it exists, but they also allow to test if a globalization indeed exists and to provide an obstruction in case it does not. 
In fact, although globalizations of partial actions on topological spaces always exist, it is known that the topological properties of the initial space are not necessarily shared by its globalization (for example, the globalization of a partial action on a Hausdorff space is not necessarily Hausdorff). Similarly, partial actions on $C^*$-algebras are not globalizable in general (see \cite[Proposition 2.1]{Abadie} for a criterion for the existence of a globalization of a partial action on commutative $C^*$-algebras). Theorem~\ref{thm:globalization} allows to identify more cases of this phenomenon and, in particular, Corollary~\ref{obstruction} shows that in the category of algebras over a field there exist geometric partial comodules which do not admit a globalization. 
We conclude the paper with a few additional examples. 

In two forthcoming papers \cite{Saracco-Vercruysse1, Saracco-Vercruysse2}, we analyse closely a number of concrete instances of globalization obtained by applying the general approach of the present paper. In particular, we will show how globalization theorems appearing in the literature (and recalled above) are subsumed as particular instances of our results and, moreover, how our approach allows to obtain new types of globalizations as well.


\section{Geometric partial comodules}

\subsection{Preliminaries}\label{ssec:genparcom}

Let $\left(\cC,\otimes,\I,\calpha,\clambda,\crho\right)$ be a monoidal category with pushouts. For any object $X$ in $\cC$, we usually denote the identity morphism on $X$ again by $X$. Moreover, for any algebra $A$ and any coalgebra $H$ in $\cC$, we denote by $\Mod_A$ the category of (right) $A$-modules and by $\Com^H$ the category of (right) $H$-comodules. We also assume implicitly the category $\cC$ to be strict (\ie $\calpha,\clambda,\crho$ being identities) and hence omit the constraint isomorphisms.

Recall now from \cite[\S2]{JoostJiawey} the following definitions.

\begin{definition}
Let $(H,\Delta,\varepsilon)$ be a coalgebra in $\cC$. A \emph{partial comodule datum} is a cospan
\begin{equation}\label{eq:pcoaction}
\begin{gathered}
\xymatrix@R=7pt{
X \ar[dr]_-{\rho_X} & & X\otimes H \ar@{->>}[dl]^(0.45){\pi_X} \\
 & X\bullet H & 
}
\end{gathered}
\end{equation}
in $\cC$ where $\pi_X$ is an epimorphism.
\end{definition}

\begin{remark}\label{cospans}
Recall that cospans in a category with pushouts form a bicategory. The same is true for those cospans admitting a leg which is an epimorphism, as in \eqref{eq:pcoaction}. The composition is defined by means of the pushout of the adjacent maps, that is to say, for the cospans
\[
\begin{gathered}
\xymatrix@!C@R=7pt{
X_1 \ar[dr]_-{f_1} & & X_2 \ar@{->>}[dl]^{\pi_1} \\
 & Y_1 & 
}\end{gathered} \qquad \text{and} \qquad 
\begin{gathered}
\xymatrix@!C@R=7pt{
X_2 \ar[dr]_-{f_2} & & X_3 \ar@{->>}[dl]^{\pi_2} \\
 & Y_2 & 
}\end{gathered}
\]
the composition is provided by the cospan
\[
\xymatrix@!C@R=7pt{
X_1 \ar[dr]_-{f_1} & & X_2 \ar@{->>}[dl]^{\pi_1} \ar[dr]_-{f_2} & & X_3 \ar@{->>}[dl]^{\pi_2} \\
 & Y_1 \ar[dr]_-{\tilde{f}_2} &  & Y_2 \ar@{->>}[dl]^-{\tilde{\pi}_1} & \\
 & & Y_3.  \ar@{}[u]|<<<{\pushout} & & 
}
\]
Given two cospans $(Y_1,f_1,\pi_1)$ and $(Y_2,f_2,\pi_2)$ with same domain $X_1$ and same codomain $X_2$, a morphism $\alpha:(Y_1,f_1,\pi_1)\to (Y_2,f_2,\pi_2)$ of cospans is a morphism $\alpha: Y_1 \to Y_2$ in $\cC$ such that $\pi_2 = \alpha\circ \pi_1$ and $f_2 = \alpha \circ f_1$. Notice that if a morphism $\alpha$ as before exists, then it is unique and it is an epimorphism, because $\pi_1$ and $\pi_2$ are epimorphisms themselves. As a consequence, the Hom-categories in the bicategory of cospans with one epimorphic leg are in fact partially ordered sets. Thus, if between two such cospans there exist morphisms in both directions, then these morphisms are mutual inverses and so, in particular, isomorphisms.
\end{remark} 

Any partial comodule datum induces canonically the following pushouts 
\begin{equation}
\begin{gathered}
\xymatrix @!0 @R=37pt @C=55pt{
& X\ot H \ar@{->>}[dl]_-{\pi_X} \ar[dr]^-{\rho_X\ot H}\\
X\bul H \ar[dr]_(0.4){\rho_X\bul H}  && (X\bul H)\ot H \ar@{->>}[dl]^(0.4){\pi_{X\bul H}}\\
&(X\bul H)\bul H\ar@{}[u]|<<<{\pushout}
}
\end{gathered} 
\quad
\begin{gathered}
\xymatrix @!0 @R=37pt @C=55pt{
& X\ot H \ar@{->>}[dl]_-{\pi_X} \ar[dr]^-{X\ot \Delta}\\
X\bul H \ar[dr]_(0.4){X\bul \Delta} && X\ot H\ot H\ar@{->>}[dl]^(0.45){\pi_{X,\Delta}} \ar[dr]^(0.55){\pi_X\ot H}\\
& X\bul(H\ot H) \ar[dr]_(0.4){\pi'_{X}} \ar@{}[u]|<<<{\pushout}  && (X\bul H)\ot H \ar@{->>}[dl]^(0.4){\pi'_{X,\Delta}} \\
&&X\bul(H\bul H) \ar@{}[u]|<<<{\pushout}
}
\end{gathered}\label{eq:pushcan2}
\end{equation}

\begin{definition}
Let $(H,\Delta,\varepsilon)$ be a coalgebra in 
 $\cC$. A {\em geometric partial comodule} is a partial comodule datum $(X,X\bul H,\pi_X,\rho_X)$ that satisfies the following conditions.
\begin{enumerate}[label=({GP\arabic*}),leftmargin=1.3cm]
\item\label{item:QPC1} Counitality:
there exists a morphism $X\bul\varepsilon:X\bul H\to X$ which makes the following diagram commutative.
\[
\begin{gathered}
\xymatrix @!0 @R=16pt @C=65pt{
X  \ar@/_2.5ex/[dddr]_-{\id_X} \ar[dr]^-{\rho_X} && X\ot H  \ar@{->>}[dl]_-{\pi_X}\ar@/^2.5ex/[dddl]^-{X\ot\varepsilon} \\
& X\bul H \ar[dd]_-{X\bul\varepsilon} & \\
 & & \\
& X.
}
\end{gathered}
\]
\item\label{item:QPC2} Geometric coassociativity:
there exists an isomorphism 
\[\theta:X\bul(H\bul H)\to (X\bul H)\bul H\] 
such that the following diagrams commute
\[
\xymatrix@!0 @R=35pt @C=48pt{
& (X\bul H)\ot H \ar@{->>}[dl]_(0.6){\pi'_{X,\Delta}} \ar@{->>}[dr]^(0.6){\pi_{X\bul H}}\\
X\bul (H\bul H) \ar[rr]_-{\theta} && (X\bul H)\bul H
}\qquad
\xymatrix @!0 @R=35pt @C=48pt{
X \ar[rr]^-{\rho_X} \ar[d]_-{\rho_X} &&X\bul H \ar[rr]^-{\rho_X\bul H} && (X\bul H)\bul H 
\\
X\bul H \ar[rr]_-{X\bul \Delta} && X\bul(H\ot H) \ar[rr]_-{\pi'_X} && X\bul(H\bul H). \ar[u]_{\theta} 
}
\]
\end{enumerate}

If $(X,X\bul H,\pi_X,\rho_X)$ and $(Y,Y\bul H,\pi_Y,\rho_Y)$ are geometric partial comodules, then a \emph{morphism of geometric partial comodules} is a pair $(f,f\bul H)$ of morphisms in $\cC$ with $f:X\to Y$ and $f\bul H: X\bul H \to Y\bul H$ such that the following diagram commutes
\begin{equation}\label{eq:shield}
\begin{gathered}
\xymatrix @!0 @R=18pt @C=60pt {
X \ar[dd]_-{f} \ar[dr]^-{\rho_X} & & X\ot H \ar@{->>}[dl]_-{\pi_X} \ar[dd]^-{f\ot H} \\ 
 & X\bul H \ar[dd]_-{f\bul H} &  \\
Y \ar[dr]_-{\rho_Y} & & Y \ot H \ar@{->>}[dl]^-{\pi_{Y}} \\
 & Y \bul H. & 
}
\end{gathered}
\end{equation}
We will often denote a geometric partial comodule $(X,X\bul H,\pi_X,\rho_X)$ simply by $X$ and a morphism as above simply by $f$. Moreover, we denote by $\gparcom{H}$ the category of geometric partial comodules over $H$.
\end{definition}

From the above definitions (and more precisely from the fact that the morphisms $\pi_X$ are epimorphisms) it follows that the obvious forgetful functor $U:\gparcom{H}\to \cC$ is faithful. 
In addition, any usual (global) comodule $(X,\delta_X)$ over $H$ is a geometric partial comodule where $\pi_X$ is the identity and $\rho_X \coloneqq \delta_X$. More precisely, $\Com^H$ is a full subcategory of $\gparcom{H}$ and we denote the associated embedding functor by $\cI:\com{H} \to \gparcom{H}$.

By specializing $\cC$ to appropriate categories, examples of geometric partial comodules can be obtained from various partial structures studied extensively in literature, such as partial actions of (topological) groups and monoids (see \cite{Abadie,KellendorkLawason,Novikov}), partial (co)actions and (co)representations of Hopf algebras (see \cite{ParCorep,AlvesBatistaVercruysse}) and partial comodule algebras (see \cite{AlvesBatista,Caenepeel-Janssen}). 
For some concrete examples we refer the reader to \cite{JoostJiawey}, \cite{Saracco-Vercruysse1} and \cite{Saracco-Vercruysse2}.

\begin{remarks}\label{rem:strict}
\begin{enumerate}[label=(\roman*), ref=(\roman*), leftmargin=0.8cm] 
\item\label{item:rems1} The notion of geometric partial comodule should not be confused with the notion of partial comodule over a Hopf algebra as it appears in \cite[\S3]{ParCorep}. 
\item\label{item:rems2} If $(Y,\delta_Y)$ is an $H$-comodule, viewed as a geometric partial comodule under the embedding functor $\cI$, and if $(X,X\bul H,\pi_X,\rho_X)$ is a geometric partial comodule, then $f:Y\to X$ is a morphism of geometric partial comodules $\cI(Y) \to X$ if and only if the following diagram commutes
\begin{equation}\label{eq:morphismIPCD}
\begin{gathered}
\xymatrix{
Y \ar[rr]^f \ar[d]_-{\delta_Y} && X \ar[d]^-{\rho_X}\\
Y\ot H \ar[r]^-{f\ot H} & X\ot H \ar[r]^-{\pi_X} & X\bul H.
}
\end{gathered}
\end{equation}
\end{enumerate}
\end{remarks}

Let us conclude this subsection by recalling that one of the most important sources of geometric partial comodules is provided by the so-called \emph{induction procedure}. This construction appears originally as \cite[Example 2.5]{JoostJiawey} under slightly stronger hypotheses on the base category $\cC$, but the argument in \cite{JoostJiawey} still holds in the present context, too.

\begin{definition}\label{def:induction}
Let $(Y,\delta)$ be an $H$-comodule and let $p:Y\to X$ an epimorphism in $\cC$. 
The pushout
\begin{equation}\label{eq:globcom}
\begin{gathered}
\xymatrix @!0 @R=23pt @C=60pt {
 & Y \ar[dr]^(0.55){\,\ (p\otimes H)\circ\delta} \ar@{->>}[dl]_-{p} & \\
 X \ar[dr]_-{\rho_X} & & X\otimes H \ar@{->>}[dl]^-{\pi_X} \\
 & X\bul H \ar@{}[u]|<<<{\pushout} &
}
\end{gathered}
\end{equation}
makes $(X,X\bul H,\pi_X,\rho_X)$ a geometric partial comodule and $p$ becomes a morphism of geometric partial comodules $p: \cI(Y) \to X$. We refer to this as the \emph{induced partial comodule} structure from $Y$ to $X$.
\end{definition}

The motivation for the above construction comes from the following example.

\begin{example}\label{ex:induced}
Considering the case $\cC=\op{\Set}$,
assume that $Y$ is a $G$-set with global action $\beta:G\times Y\to Y$ and that $j:X\subseteq Y$ is any subset. One can perform the pullback
\[
\xymatrix @!0 @R=23pt @C=60pt {
 & Y & \\
G\times X \ar[ur]^(0.55){\beta\circ(G\times j)\ } & & X \ar@{_{(}->}[ul]_-{j} \\
 & G\bul X. \ar@{_{(}->}[ul]^-{\iota} \ar[ur]_-{\alpha} \ar@{}[u]|<<<{\pushout} &
}
\]
Because $j$ is injective, it turns out that $G\bul X=\left\{(g,x)\in G\times X\mid \beta_g(x) \in X\right\}$ and that $\alpha(g,x) = \beta_g(x)$ for all $x\in X\cap\beta_g^{-1}(X)$. If we define $X_{\gi{g}} = \{x\in X\mid (g,x)\in G\bullet X\}$ and $\alpha_g(x) \coloneqq  \alpha(g,x)$ for all $g\in G$, then $\{X_g,\alpha_g\}$ gives a partial action of $G$ on $X$ in the sense of {\cite[Definition 1.2]{Exel-set}}. 
We say that this is the partial action {\em induced} from $Y$ to $X$. 
\end{example}

The so-called \emph{globalization question} concerns exactly the problem of deciding when a geometric partial comodule structure on an object $X$ over a coalgebra $H$ has been induced by a (preferably, uniquely determined) global comodule $(Y,\delta)$ as in Definition \ref{def:induction} and which geometric partial comodules admit such an inducing global comodule. In the present paper we will address both these questions by providing a criterion to determine when a geometric partial comodule is globalizable (Theorem \ref{thm:globalization}) and by providing a complete description (under some mild assumptions on the category of $H$-comodules) of the globalizable geometric partial comodules, in terms of what we are going to call in the next subsection the \emph{minimal proper covers} (Theorem \ref{globcov}).

\subsection{Making induction functorial: the category of global covers}\label{ssec:globcov}

Inspired by the construction from Definition \ref{def:induction}, we introduce the following definition, which allows us to make the process of induction functorial.

\begin{definition}\label{def:globcov}
We denote by $\Cov ^H$ the category whose objects are triples $(Y,X,p)$ where $Y$ is a global $H$-comodule, $X$ is an object in $\cC$ and $p:Y\to X$ is an epimorphism in $\cC$. We will refer to these objects as (\emph{global}) \emph{covers} and often denote them simply by $p: Y \to X$. 

A morphism $(F,f):(Y,X,p)\to (Y',X',p')$ in $\Cov ^H$ consists of  a morphism of $H$-comodules $F:Y\to Y'$ and a $\cC$-morphism $f:X\to X'$ such that $p'\circ F=f\circ p$.
\end{definition}

\begin{proposition}\label{pr:covers}
The procedure of constructing the induced geometric partial comodule structure from a global cover as in Definition \ref{def:induction} defines a functor
\[
\Ind:\Cov^H\to \gparcom{H}.
\]
\end{proposition}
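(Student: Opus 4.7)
The object part of the functor is already determined by Definition~\ref{def:induction}, so the substance of the proof lies in defining $\Ind$ on morphisms and then checking functoriality. Given a morphism $(F,f)\colon (Y,X,p)\to (Y',X',p')$ in $\Cov^H$, my plan is to construct the companion morphism $f\bul H\colon X\bul H \to X'\bul H$ by invoking the universal property of the pushout~\eqref{eq:globcom} defining $X\bul H$.

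The natural candidates forming a cone under that pushout diagram are $\rho_{X'}\circ f\colon X\to X'\bul H$ and $\pi_{X'}\circ(f\ot H)\colon X\ot H\to X'\bul H$. To apply the universal property I need to verify that these agree after precomposition with the respective legs $p$ and $(p\ot H)\circ\delta$ coming from $Y$. The key identity to establish is
\[
\rho_{X'}\circ f\circ p \;=\; \pi_{X'}\circ (f\ot H)\circ (p\ot H)\circ\delta,
\]
and this is a short diagram chase relying on three ingredients: the compatibility $f\circ p = p'\circ F$ built into a morphism of covers, the $H$-colinearity of $F$ (which rewrites $(F\ot H)\circ\delta$ as $\delta'\circ F$), and the pushout relation $\rho_{X'}\circ p' = \pi_{X'}\circ(p'\ot H)\circ\delta'$ defining the induced partial comodule structure on $X'$.

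Once this compatibility is in place, the universal property will produce a unique $f\bul H\colon X\bul H\to X'\bul H$ satisfying $(f\bul H)\circ\rho_X = \rho_{X'}\circ f$ and $(f\bul H)\circ\pi_X = \pi_{X'}\circ (f\ot H)$; these two identities are precisely the commutativity required by the shield diagram~\eqref{eq:shield}, so $(f,f\bul H)$ is indeed a morphism in $\gparcom{H}$. Functoriality will then be a formal consequence of the uniqueness clause in the same universal property: the pair $(\id_Y,\id_X)$ clearly induces $\id_{X\bul H}$, and for two composable morphisms of covers both the composite of the induced morphisms and the morphism induced by the composite satisfy the same factorization property, forcing them to coincide. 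The only step requiring genuine attention is the diagram chase above; everything else follows formally from the universal property of pushouts, so I do not anticipate any serious obstacle.
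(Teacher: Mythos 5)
Your proposal is correct and follows essentially the same route as the paper: the induced morphism $f\bul H$ is obtained from the universal property of the pushout \eqref{eq:globcom}, after verifying exactly the compatibility identity you state via the same three ingredients (the cover condition $f\circ p = p'\circ F$, $H$-colinearity of $F$, and the defining pushout relation for $X'\bul H$). Your explicit remark that functoriality follows from the uniqueness clause is a point the paper leaves implicit, but there is no substantive difference.
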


\begin{proof}
From Definition \ref{def:induction} we see that $\Ind$ is well-defined on objects. If $(F,f):(Y,X,p)\to (Y',X',p')$ is a morphism in $\Cov^H$, then we set $\Ind(F,f) \coloneqq f$. Since $F$ is $H$-colinear,
\begin{align*}
\pi_{X'}\circ (f\ot H)\circ (p\ot H)\circ \delta_Y & = \pi_{X'}\circ (p'\ot H)\circ (F\ot H)\circ \delta_Y = \pi_{X'}\circ (p'\ot H)\circ \delta_{Y'}\circ F\\
&\stackrel{\eqref{eq:globcom}}{=} \rho_{X'}\circ p'\circ F = \rho_{X'}\circ f\circ p
\end{align*}
and therefore, by the universal property of the pushout $X\bul H$, there exists a unique morphism $f\bul H:X\bul H\to X'\bul H$ in $\cC$ such that $(f\bul H)\circ \rho_X=\rho_{X'}\circ f$ and $(f\bul H)\circ \pi_X=\pi_{X'}\circ (f\ot H)$, i.e. $(f,f\bul H)$ is a morphism in $\gparcom{H}$. 
\end{proof}

The following example shows that the same geometric partial comodule can be induced by many different global comodules. 

\begin{example}\label{ex:global}
Assume that $X$ and $Y$ are global $H$-comodules and that $p:Y \to X$ is an epimorphism which is $H$-colinear. 
One may check directly that we have the pushout
\[
\begin{gathered}
\xymatrix @!0 @R=23pt @C=60pt {
 & Y \ar[dr]^(0.55){(p\otimes H)\circ\delta_Y} \ar@{->>}[dl]_-{p} & \\
 X \ar[dr]_-{\delta_X} & & X\otimes H \ar@{=}[dl] \\
 & X\ot H \ar@{}[u]|<<<{\pushout} &
}
\end{gathered}
\]
and hence the induced geometric partial comodule structure on $X$ is its own global one.
\end{example}

In view of Example \ref{ex:global}, it is natural to introduce the following definitions, in order to avoid that a cover contains superfluous information.

\begin{definition}\label{def:cogen}
Let $(Y,\delta_Y)$ be a global comodule over the coalgebra $H$ in $\cC$ and let $p:Y\to X$ be an epimorphism in $\cC$. We say that $Y$ is \emph{co-generated} by $X$ as a comodule if the following composition is a monomorphism in $\com{H}$:
\[
(p\ot H)\circ \delta_Y:Y\to X\ot H.
\]
\end{definition}

\begin{example}\label{ex:cogen}
The subsequent examples argue in favour of the appropriateness of the terminology just introduced. 
\begin{enumerate}[label=(\arabic*),ref=(\arabic*),leftmargin=1cm]
\item\label{cog:item1} If we rephrase Definition \ref{def:cogen} for $\cC=\op{\vectk}$, then we recover the classical notion of module generated by a subspace.

\item\label{cog:item2} If we let $\cC$ be $\op{\Set}$ and $H$ be a group, then we recover the familiar notion of orbit under the action of $H$.

\item In $\cC=\vectk$ instead, it says exactly that $Y$ is isomorphic to a subcomodule of the free comodule $X\ot H$ (via $(p\ot H)\circ \delta_Y$), which is in accordance with the definition of finitely co-generated comodules in $\vectk$ used in \cite[Example 1.2]{Takeuchi}. 
\end{enumerate}
\end{example}

\begin{definition}\label{def:minprop}
A global cover $(Y,X,p)$ is called {\em proper} if $Y$ is co-generated by $X$, that is, if $(p \ot H) \circ \delta_Y$ is a monomorphism in $\com{H}$. Denote by $\Cov^H_{pr}$ the full subcategory of $\Cov^H$ consisting of all proper covers. 

A proper cover $(Y,X,p)$ is called {\em minimal} if it does not factor through another proper cover. More explicitly, if $(Y',X,p')$ is another proper cover such that $p=p'\circ q$ for some morphism $q:Y\to Y'$ in $\cC$, then $q$ is an isomorphism. Denote by $\Cov^H_{pr,min}$ the full subcategory of $\Cov^H_{pr}$ consisting of all minimal proper covers. 
\end{definition}

\begin{corollary}\label{co:covers}
The functor $\Ind$ from Proposition \ref{pr:covers} restricts to a faithful functor $\Ind:\Cov^H_{pr}\to \gparcom{H}$.
\end{corollary}

\begin{proof}
Consider morphisms $(F,f),(F',f):(Y,X,p)\to (Y',X',p')$ in $\Cov^H_{pr}$, so that $\Ind(F,f)=f=\Ind(F',f)$. Then by a similar computation as in the proof of Proposition~\ref{pr:covers}, we find that
$$(p'\ot H)\circ \delta_{Y'}\circ F=(f\ot H)\circ (p'\ot H)\circ \delta_Y = (p'\ot H)\circ \delta_{Y'}\circ F'.$$
Since $(Y',X',p')$ is proper, $(p'\ot H)\circ \delta_{Y'}$ is a monomorphism in $\com{H}$, hence $F=F'$, hence $(F,f)=(F',f)$ and $\Ind$ is faithful.
\end{proof}

\begin{example}\label{ex:TopCompl}
It is important to notice that the functor $\Ind$ from Corollary~\ref{co:covers} is not full. For example, in $\cC = \op{\Top}$ consider the topological group $H \coloneqq \left((\R,\top),+,0\right)$ and the natural (topological) global action $\rho_X$ of $H$ on $X \coloneqq (\R,\top)$ itself given by translation, where all the copies of $\R$ have the ordinary euclidean topology $\top$. Then $\left(X,X,\id_\R\right)$ is an object in $\Cov^{H}_{pr}$ and $\Ind(X,X,\id_\R)=(X,H \times X,\id_{H \times X},\rho_X)$ with the global action of $H$.

Consider also the global action of $H=((\R,\top),+,0)$ on $Y \coloneqq (\R,\top')$, but with the trivial topology $\top'\coloneqq \{\emptyset,\R\}$. The identity $p\coloneqq \id_\R:(\R,\top)\to (\R,\top')$ is a continuous $\R$-linear monomorphism. The triple $(Y,X,p)$ is an object in $\Cov^{H}_{pr}$, because $H \times X \xrightarrow{\id_\R \times p} H \times Y \to Y$ is a continuous epimorphism. The geometric partial comodule $\Ind(Y,X,p)$ in this case is again simply the global action $(X,H \times X,\id_{H \times X},\rho_X)$ of $H$ on $X$.

Therefore, $(\id_X,\id_{H \times X})\in \gparcom{H}(\Ind(X,X,\id_\R),\Ind(Y,X,p))=\gparcom{H}(X,X)$, but since the identity map on $\R$ is not a continuous morphism from $Y$ to $X$, the morphism $(\id_X,\id_{H \times X})$ is not in the image of the functor $\Ind$, hence the functor is not full. 

\end{example}

Remark however that the cover $(Y,X,p)$ in Example \ref{ex:TopCompl} is not minimal: $Z \coloneqq (\R,\top'')$ where $\top''$ is any intermediate topology $\top'\subsetneq \top'' \subsetneq \top$ provides an intermediate, non homeomorphic, proper cover. In \S\ref{preglob}, and under some mild assumptions, we will prove that the restriction of the functor $\Ind$ to the category of minimal proper covers {\em is} full, by means of the globalization procedure (see Theorem~\ref{globcov}).


\section{The globalization question}\label{sec:globque}

As mentioned at the end of \S\ref{ssec:genparcom}, the globalization question concerns the problem of determining when a geometric partial comodule structure is induced by a (unique) global one and how to describe the induced partial comodules among all the geometric partial ones. We begin by addressing the first problem.

\subsection{Globalization for geometric partial comodules}\label{ssec:globgen}

As the globalization of a partial action of a group $G$ on a set $X$ is the smallest $G$-set containing $X$ and such that the partial action is induced by restriction of the global one (see \cite[Theorem 1.1]{Abadie}), we expect the globalization of a partial comodule $X$ to be a universal  $H$-comodule ``covering'' $X$ and such that the partial coaction is induced by the global one.

\begin{definition}\label{def:glob}
Given a geometric partial comodule $(X,X\bul H,\pi_X,\rho_X)$ over the coalgebra $H$ in the monoidal category with pushouts $\cC$, a \emph{globalization} for $X$ is a global comodule $(Y,\delta)$ with a morphism $p:Y\to X$ in $\cC$ such that
\begin{enumerate}[label=(GL\arabic*),ref=(GL\arabic*)]
\item\label{item:GL0} $p:\cI(Y) \to X$ is a morphism of geometric partial comodules (that is, \eqref{eq:morphismIPCD} commutes);
\item\label{item:GL2} the corresponding diagram \eqref{eq:globcom} 
is a pushout square in $\cC$;
\item\label{item:GL3} $Y$ is universal among all global comodules admitting a morphism of geometric partial comodules to $X$: if $(Z,\delta')$ is global and $q:\cI(Z)\to X$ is of partial comodules, then there is a unique morphism of global comodules $\eta:Z\to Y$ such that $p\circ\eta = q$.
\end{enumerate}
We say that $X$ is {\em globalizable} if a globalization for $X$ exists and we denote by $\gparcom{H}_{gl}$ the full subcategory of $\gparcom{H}$ composed by the globalizable partial comodules. 
\end{definition}

\begin{lemma}\label{lem:hihi}
Let $(X,X\bul H,\pi_X,\rho_X)$ be a geometric partial comodule and $(Y,\delta)$ be a global comodule. If $p:Y\to X$ is a morphism of geometric partial comodules in $\cC$ such that \eqref{eq:globcom} is a pushout diagram, then $p$ is an epimorphism.
\end{lemma}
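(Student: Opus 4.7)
The plan is to verify the definition of epimorphism directly. Suppose $f,g \colon X \to Z$ in $\cC$ satisfy $f \circ p = g \circ p$; the goal is to deduce $f = g$ by producing two factorisations through the pushout $X \bul H$ that a posteriori must coincide. The whole argument rests on the universal property of \eqref{eq:globcom} combined with the built-in fact that $\pi_X$ is an epimorphism.

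First I would use the counitality of the global coaction, $(Y \otimes \varepsilon) \circ \delta = \id_Y$, to rewrite
\[
f \circ (X \otimes \varepsilon) \circ (p \otimes H) \circ \delta \;=\; f \circ p \circ (Y \otimes \varepsilon) \circ \delta \;=\; f \circ p,
\]
and similarly with $g$ in place of $f$. The key idea is then to consider the \emph{two} cocones $(f,\, f \circ (X \otimes \varepsilon))$ and $(g,\, f \circ (X \otimes \varepsilon))$ under the span $X \xleftarrow{p} Y \xrightarrow{(p \otimes H) \circ \delta} X \otimes H$: both are legitimate cocones precisely because $f \circ p = g \circ p$. By the universal property of the pushout \eqref{eq:globcom} they factor through unique morphisms $h_1, h_2 \colon X \bul H \to Z$ with $h_1 \circ \rho_X = f$, $h_2 \circ \rho_X = g$, and sharing the common value $h_1 \circ \pi_X = h_2 \circ \pi_X = f \circ (X \otimes \varepsilon)$.

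The final step is to invoke the hypothesis that $\pi_X$ is an epimorphism, which is part of the very definition of a partial comodule datum: this forces $h_1 = h_2$, and precomposing with $\rho_X$ yields $f = g$.

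The point I expect to be the main obstacle, conceptually, is the choice of cocones. The naive pairing $(f, f \circ (X \otimes \varepsilon))$ versus $(g, g \circ (X \otimes \varepsilon))$ would only reduce the problem to $f \circ (X \otimes \varepsilon) = g \circ (X \otimes \varepsilon)$, and $X \otimes \varepsilon$ need not be epic in a general monoidal category (the counit $\varepsilon \colon H \to \I$ is only split mono via $\Delta$). The remedy is to \emph{force} both cocones to agree on the $X \otimes H$-leg by taking the same second component for both; then the epimorphism status of $\pi_X$ alone is enough to close the argument.
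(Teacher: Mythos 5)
Your proof is correct and rests on the same two pillars as the paper's: the universal property of the pushout \eqref{eq:globcom} applied to a cocone whose $X\ot H$-leg factors through $X\ot\varepsilon$, followed by the epimorphism property of $\pi_X$. The difference lies in how the induced maps are compared. The paper uses the single cocone $(f,\, g\circ(X\ot\varepsilon))$ to produce one morphism $\sigma$ with $\sigma\circ\rho_X=f$ and $\sigma\circ\pi_X=g\circ(X\ot\varepsilon)$, then identifies $\sigma=g\circ(X\bul\varepsilon)$ by combining the counitality axiom \ref{item:QPC1} (which gives $X\ot\varepsilon=(X\bul\varepsilon)\circ\pi_X$) with the epi-ness of $\pi_X$, and concludes $f=\sigma\circ\rho_X=g$ by \ref{item:QPC1} again. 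You instead run the universal property twice, on the cocones $(f,\,f\circ(X\ot\varepsilon))$ and $(g,\,f\circ(X\ot\varepsilon))$, which deliberately share their $X\ot H$-leg, and deduce $h_1=h_2$ directly from $\pi_X$ being epic. Your variant is marginally more economical: it only uses the counit identity $(Y\ot\varepsilon)\circ\delta=\id_Y$ of the \emph{global} comodule and never invokes \ref{item:QPC1}, so it would apply verbatim to any partial comodule datum with the stated pushout property, whether or not the counitality axiom holds for $X$. Your closing observation, that the naive pairing $(f,\,f\circ(X\ot\varepsilon))$ versus $(g,\,g\circ(X\ot\varepsilon))$ would only reduce the problem to $f\circ(X\ot\varepsilon)=g\circ(X\ot\varepsilon)$ with $X\ot\varepsilon$ not epic in general, correctly identifies the point where a careless version of the argument would stall, and explains why the shared-leg trick (or, in the paper's version, the appeal to \ref{item:QPC1}) is needed.
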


\begin{proof}
If $f,g : X \to S$ are two morphisms in $\cC$ such that $f \circ p = g \circ p$, then
\[g \circ (X \ot \varepsilon) \circ (p \ot H) \circ \delta = g \circ p = f \circ p\]
and hence, by the universal property of the pushout, there exists a unique morphism $\sigma : X \bul H \to S$ in $\cC$ such that $\sigma \circ \rho_X = f$ and $\sigma \circ \pi_X = g\circ (X \ot \varepsilon)$. However, the counitality condition \ref{item:QPC1} entails that
$\sigma \circ \pi_X = g\circ (X \ot \varepsilon) = g\circ (X \bul \varepsilon) \circ \pi_X$
and hence $\sigma = g\circ (X \bul \varepsilon)$. Thus, by \ref{item:QPC1} again,
$f = \sigma \circ \rho_X = g\circ (X \bul \varepsilon) \circ \rho_X = g$.
\end{proof}

Let $(X,X\bul H,\pi_X,\rho_X)$ be a geometric partial comodule over $H$. 
Axioms \ref{item:GL0}, \ref{item:GL2} and Lemma \ref{lem:hihi} tell us that the partial comodule $X$ is induced by the global comodule $Y$ as in Definition \ref{def:induction}. 
Axiom \ref{item:GL3} ensures that $Y$ does not carry superfluous information, as it is clear that if $p':Y'\to Y$ is an epimorphism of global comodules, then $X$ is induced by $p\circ p':Y'\to X$ as well. The universal property \ref{item:GL3} assures also that a globalization is unique up to isomorphism, whenever it exists.

Therefore, the globalization of $X$ is, by definition, a global cover $(Y,X,p)$ in the sense of Definition \ref{def:globcov} such that the given geometric partial comodule structure on $X$ is induced by the global comodule structure on $Y$ and such that $Y$ is universal with respect to this property. This suggests that one may call an \emph{inducing global cover} a global comodule $Y$ satisfying \ref{item:GL0} and \ref{item:GL2} and then the globalization would be the universal inducing global cover, in the sense of condition \ref{item:GL3}.

Similarly, one may observe that conditions \ref{item:GL0} and \ref{item:GL3} can be treated independently from condition \ref{item:GL2} (see \cite[Remark 2.4]{Saracco-Vercruysse1}). This suggests that one may also call \emph{pre-globalization} a global module $(Y, \delta)$ together with a morphism $p : Y \to X$ in $\cC$ satisfying \ref{item:GL0} and \ref{item:GL3} and not necessarily \ref{item:GL2}. 

However, since we are interested in globalizations as they have been dealt with in the literature (that is, which are inducing the given partial comodule structure and which are universal with respect to this property), we focus on global comodules satisfying all the conditions \ref{item:GL0}-\ref{item:GL3} at the same time.

The following lemma represents the key step toward our globalization theorem.

\begin{lemma}\label{constructionpreglob}
Let 
$(H,\Delta,\varepsilon)$ be a coalgebra in $\cC$. Consider a geometric partial comodule $(X,X\bul H,\pi_X,\rho_X)$, the associated free (global) $H$-comodule $(X\ot H,X\ot \Delta)$ and the parallel morphisms
\begin{equation}\label{eq:glob1}
\xymatrix@C=75pt{
(X\otimes H,X\ot \Delta) \ar@<+0.8ex>[r]^-{\rho_X\ot H} \ar@<-0.4ex>[r]_-{(\pi_X\ot H)\circ(X \otimes \Delta)} &  (X\bul H\otimes H, X\bul H\ot \Delta)
}
\end{equation}
in $\com{H}$. Consider as well a global $H$-comodule $(Y,\delta)$ and the induced geometric partial comodule $\cI(Y)=(Y,Y\ot H,\id,\delta)$. 
Then there is a bijective correspondence: 
\begin{align*}
\gparcom{H}(\cI(Y),X) & \cong \{f\in \com{H}(Y,X\ot H) ~|~ (\rho_X\ot H)\circ f=(\pi_X\ot H)\circ (X\ot \Delta)\circ f\}\\
g &\mapsto (g\ot H)\circ \delta \\
(X\ot \varepsilon)\circ f & \mapsfrom f
\end{align*}
Moreover, this correspondence is natural in both arguments $Y$ and $X$.
\end{lemma}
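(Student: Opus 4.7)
The plan is to unpack the defining conditions on both sides and provide the inverse maps directly, with all the verifications reducing to coassociativity/counitality of $\delta$ and $\Delta$, together with the axiom defining the partial comodule morphism condition.

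First I would recall from Remark~\ref{rem:strict}\ref{item:rems2} that a morphism $g:\cI(Y)\to X$ of geometric partial comodules is nothing but a morphism $g:Y\to X$ in $\cC$ satisfying $\rho_X\circ g = \pi_X\circ (g\otimes H)\circ\delta$. With this in mind, given such a $g$, I would set $f\coloneqq (g\otimes H)\circ\delta$ and check the two required properties: (i) $H$-colinearity of $f$, which is an immediate consequence of the coassociativity of $\delta$; and (ii) the equalizer condition $(\rho_X\otimes H)\circ f=(\pi_X\otimes H)\circ (X\otimes\Delta)\circ f$, which follows by substituting the partial-morphism identity $\rho_X\circ g=\pi_X\circ(g\otimes H)\circ\delta$ on the left hand side and then applying coassociativity of $\delta$ to rearrange things into the right hand side.

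Conversely, given a morphism $f:Y\to X\otimes H$ in $\com{H}$ satisfying the equalizer condition, I would define $g\coloneqq (X\otimes\varepsilon)\circ f$ and first establish the key intermediate identity
\[
(g\otimes H)\circ\delta = (X\otimes\varepsilon\otimes H)\circ(f\otimes H)\circ\delta = (X\otimes\varepsilon\otimes H)\circ(X\otimes\Delta)\circ f = f,
\]
using $H$-colinearity of $f$ and counitality of $\Delta$. Then, applying the functor $(-\otimes\varepsilon)$ to the equalizer condition on $f$, I would obtain
\[
\rho_X\circ g = \rho_X\circ(X\otimes\varepsilon)\circ f = \pi_X\circ(X\otimes H\otimes\varepsilon)\circ(X\otimes\Delta)\circ f = \pi_X\circ f = \pi_X\circ(g\otimes H)\circ\delta,
\]
which is precisely the condition needed to conclude that $g$ is a morphism of geometric partial comodules $\cI(Y)\to X$.

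Bijectivity of the two assignments is then immediate: starting from $g$, one recovers $(X\otimes\varepsilon)\circ(g\otimes H)\circ\delta = g\circ(Y\otimes\varepsilon)\circ\delta = g$ by counitality of $\delta$; starting from $f$, the intermediate identity $(g\otimes H)\circ\delta=f$ established above is exactly the roundtrip identity. Finally, naturality in $Y$ (with respect to morphisms of global comodules) and in $X$ (with respect to morphisms of geometric partial comodules) is a direct diagram chase from the explicit formulas $g\mapsto(g\otimes H)\circ\delta$ and $f\mapsto(X\otimes\varepsilon)\circ f$, combined with the defining compatibility square \eqref{eq:shield} of morphisms of partial comodules.

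The proof is essentially a routine verification; the only mildly delicate point is that checking $\rho_X\circ g=\pi_X\circ(g\otimes H)\circ\delta$ from the equalizer condition on $f$ requires the preliminary identity $(g\otimes H)\circ\delta=f$, so one must be careful to establish this first rather than trying to attack the partial-morphism condition head-on.
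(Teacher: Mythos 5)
Your proposal is correct and follows essentially the same route as the paper's proof: both directions are verified by the same substitutions using coassociativity/counitality and the characterization of morphisms $\cI(Y)\to X$ from Remark~\ref{rem:strict}\ref{item:rems2}, and the roundtrip identities are the same. The only (immaterial) difference is that you establish $(g\otimes H)\circ\delta=f$ before checking the partial-morphism condition, whereas the paper folds that identity into the end of its computation.
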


\begin{proof}
Consider $g\in \gparcom{H}(\cI(Y),X)$, that is to say, $\pi_X\circ\left(g\ot H\right) \circ \delta = \rho_X \circ g$ (see \eqref{eq:morphismIPCD}). Since both $\delta: Y\to Y\ot H$ and $g\ot H:Y\ot H\to X\ot H$ are morphisms of global $H$-comodules, so is $(g\ot H)\circ \delta$. Moreover, we find that 
\begin{multline*}
\left(\rho_X\ot H\right) \circ \left(g\ot H\right) \circ \delta \stackrel{\eqref{eq:morphismIPCD}}{=} \left(\pi_X \ot H\right) \circ \left(g\ot H \ot H\right) \circ \left(\delta \ot H\right) \circ \delta \\
 = \left(\pi_X \ot H\right) \circ \left(g\ot H \ot H\right) \circ \left(Y \ot \Delta\right) \circ \delta = \left(\pi_X \ot H\right) \circ \left(X \ot \Delta\right) \circ \left(g\ot H\right) \circ \delta.
\end{multline*}
Consequently, the first map of the statement is well-defined. 

Conversely, for any morphism of global comodules $f:Y\to X\ot H$ equalizing $(\rho_X\ot H)$ and $(\pi_X\ot H)\circ (X\ot \Delta)$ we find, using this equalizing property in the second equality, that 
\begin{eqnarray*}
\rho_X\circ (X\ot \varepsilon)\circ f &=& (X\bul H\ot \varepsilon)\circ (\rho_X\ot H)\circ f\\
&=& (X\bul H\ot \varepsilon)\circ (\pi_X\ot H)\circ (X\ot \Delta)\circ f\\
&=& \pi_X \circ (X\ot H\ot \varepsilon) \circ (X\ot \Delta)\circ f = \pi_X \circ f\\
&=& \pi_X \circ (X\ot \varepsilon \ot H) \circ (X\ot \Delta)\circ f\\
&=& \pi_X \circ (X\ot \varepsilon \ot H) \circ (f\ot H)\circ \delta
\end{eqnarray*}
where we used the $H$-colinearity of $f$ in the last equality. Consequently (see \eqref{eq:morphismIPCD}), $(X\ot \varepsilon)\circ f$ is a morphism of geometric partial comodules.

As $f \in \com{H}(Y,X\ot H)$ is $H$-colinear, we have that $f=(X\ot \varepsilon \ot H) \circ (f\ot H)\circ \delta$.  Finally, for any $g\in \gparcom{H}(\cI(Y),X)$ we obviously have
$$(X\ot \varepsilon)\circ (g\ot H)\circ \delta=g\circ (Y\ot \varepsilon)\circ \delta=g$$
and hence we obtain the required bijection.
Naturality follows by a direct computation.
\end{proof}

In case $\cC=\Set^{op}$, Definition \ref{def:glob} coincides with the globalization (or enveloping action) as defined and studied in \cite{Abadie}. It has been proven, for instance in \cite[Theorem 1.1]{Abadie} or \cite[\S3.1]{KellendorkLawason}, that the globalization of a partial action of a group $G$ on a set $X$ or, equivalently, of a geometric partial comodule
\[
\xymatrix @!0 @C=55pt @R=25pt {
G \times X & & X \\
 & G\bullet X\ar[ur]_{\alpha} \ar@{_{(}->}[ul]^{\iota} & 
}
\] 
over $G$ in $\Set^{op}$ (see \cite{JoostJiawey} and Example \ref{ex:induced}), always exists and it is given by the quotient $Y\coloneqq (G\times X)/\sim$, where $(g,x)\sim(h,y)$ if and only if $(h^{-1}g,x)\in G\bullet X$ and $\alpha(h^{-1}g,x)=y$. The (global) action of $G$ on $Y$ is given by 
$$h\cdot [g,x]=[hg,x],$$
where $[g,x]$ denotes the equivalence class of $(g,x)\in G\times X$ under the relation $\sim$. The following proposition clarifies the motivation behind the approach to globalization we advocate in this paper.

\begin{proposition}\label{prop:globset}
Consider a group $G$ and a partial action $(X,G\bul X,\iota,\alpha)$. Then the globalization of $X$ is given exactly by the 
the coequalizer in $\Set$ of the pair 
\begin{equation}\label{eq:sets}
\begin{gathered}
\xymatrix @C=70pt {
G\times (G\bullet X) \ar@<+0.4ex>[r]^-{G\times\alpha} \ar@<-0.4ex>[r]_-{(\mu\times X)\circ(G\times \iota)} & G\times X.
}
\end{gathered}
\end{equation}
\end{proposition}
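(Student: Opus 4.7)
The strategy is to apply Lemma~\ref{constructionpreglob} in $\cC = \op{\Set}$ with $H = G$: the universal property \ref{item:GL3} defining the globalization translates, after dualization, to the universal property of a coequalizer in $\Set$, and this coequalizer is precisely the one announced in the statement.

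First, I translate the setting. In $\cC = \op{\Set}$, a $G$-set is the same data as a global $G$-comodule; the partial comodule datum $(X, G\bullet X, \iota, \alpha)$ encodes the partial $G$-action as recalled in Example~\ref{ex:induced}; and the cofree global comodule $X \otimes G$ corresponds dually to the free $G$-set $G \times X$. A direct inspection shows that the two parallel morphisms of global $G$-comodules in \eqref{eq:glob1}, namely $\rho_X \otimes H$ and $(\pi_X \otimes H) \circ (X \otimes \Delta)$, dualize respectively to $G \times \alpha$ and $(\mu \times X) \circ (G \times \iota)$, and thus coincide with the pair \eqref{eq:sets}. Both these maps in $\Set$ are $G$-equivariant with respect to left multiplication on the leftmost $G$-factor.

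Let $C$ denote the coequalizer in $\Set$ of the pair \eqref{eq:sets}. Since both parallel maps are $G$-equivariant, $C$ inherits a unique $G$-action making the quotient $\pi \colon G \times X \twoheadrightarrow C$ equivariant, so $C$ is itself a global $G$-set. The universal property of the coequalizer then yields, for every $G$-set $Z$, a natural bijection between $G$-equivariant maps $G \times X \to Z$ coequalizing \eqref{eq:sets} and $G$-equivariant maps $C \to Z$. Combining this with the bijection provided by Lemma~\ref{constructionpreglob} (applied in $\op{\Set}$), I obtain a natural bijection
\[
\gparcom{G}\bigl(\cI(Z), X\bigr) \;\cong\; G\text{-}\Set(C, Z).
\]
This is exactly the universal property \ref{item:GL3} of the globalization in $\op{\Set}$, with canonical morphism $p \colon \cI(C) \to X$ obtained by chasing $\id_C$ through the bijection: explicitly, $p$ dualizes to the map $X \to C$ sending $x$ to $[(e, x)]$, and this is a morphism of partial actions because the defining relation of $C$ forces $[(e, \alpha(g, x))] = [(g, x)]$ for every $(g, x) \in G \bullet X$, establishing \ref{item:GL0}.

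The remaining axiom \ref{item:GL2} amounts to verifying that the pullback in $\Set$ of the canonical map $X \to C$ against the quotient $\pi \colon G \times X \twoheadrightarrow C$ coincides with $G \bullet X$ equipped with its structure maps $\alpha$ and $\iota$. One inclusion is immediate from the same observation used for \ref{item:GL0}: each $(g, x) \in G \bullet X$ yields $[(e, \alpha(g, x))] = [(g, x)]$, so $(\alpha(g, x), (g, x))$ lies in the pullback. The main obstacle is the converse direction, namely that whenever $[(e, x')] = [(g, x)]$ in $C$ one necessarily has $(g, x) \in G \bullet X$ and $x' = \alpha(g, x)$. This is the classical technical core of the globalization theorems for partial group actions (see \cite{Abadie,KellendorkLawason}): it requires a careful analysis of the elementary relations generating the equivalence defining $C$, using the axioms of the partial action to collapse any chain of such relations to a single step.
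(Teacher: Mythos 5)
Your strategy is genuinely different from the paper's, so let me first say what the paper does. It does \emph{not} re-verify the globalization axioms for the coequalizer. It takes as known (from \cite{Abadie,KellendorkLawason}) that the globalization is the quotient $(G\times X)/\sim$, where $(g,x)\sim(h,y)$ iff $(h^{-1}g,x)\in G\bullet X$ and $\alpha(h^{-1}g,x)=y$, and then shows that the coequalizer of \eqref{eq:sets} computes exactly this quotient. The key step is an explicit pair of mutually inverse bijections $\varphi\colon R\to G\times(G\bullet X)$ and $\psi\colon G\times(G\bullet X)\to R$, where $R\subseteq(G\times X)\times(G\times X)$ is the graph of $\sim$, which intertwine the two projections $p_1,p_2\colon R\to G\times X$ with the pair \eqref{eq:sets}. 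Hence the relation induced by the pair \eqref{eq:sets} is \emph{already} the equivalence relation $R$, the coequalizer is $(G\times X)/\sim$, and the known globalization theorem does the rest. Your bottom-up route --- verifying \ref{item:GL0}, \ref{item:GL2} and \ref{item:GL3} directly for the coequalizer via Lemma~\ref{constructionpreglob} --- is a sensible alternative, and your treatment of \ref{item:GL0} and \ref{item:GL3} is correct (it is essentially the argument of Theorem~\ref{thm:globalization} specialized to $\op{\Set}$).

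The gap is at \ref{item:GL2}. You correctly isolate the needed claim --- if $[(e,x')]=[(g,x)]$ in the coequalizer $C$, then $(g,x)\in G\bullet X$ and $x'=\alpha(g,x)$ --- but you do not prove it, and the deferral to \cite{Abadie,KellendorkLawason} does not close the argument as written: those references establish properties of the quotient by the relation $\sim$, not of the coequalizer of the particular parallel pair \eqref{eq:sets}, and transferring their results requires identifying the equivalence relation generated by \eqref{eq:sets} with $\sim$. That identification is exactly what is missing from your proof and exactly what the paper's $\varphi/\psi$ computation supplies. Concretely, the elementary relations generated by the pair are $(mn,z)\approx(m,\alpha(n,z))$ for $m\in G$ and $(n,z)\in G\bullet X$; one must check, using the partial action axioms, that the set of such pairs is already reflexive, symmetric and transitive (equivalently, that it coincides with $R$), so that no ``collapsing of chains'' is needed at all. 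That verification is the mathematical core of the proposition, and it is the one step your proof leaves out.
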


\begin{proof}
From the remark preceding this proposition we know that the globalization $G\times X/\sim$, being a quotient by an equivalence relation, is by definition the coequalizer of the pair $\xymatrix{R \ar@<+0.4ex>[r]^-{p_1} \ar@<-0.4ex>[r]_-{p_2} & G\times X}$ where
\[
R \coloneqq \left\{((g,x),(h,y))\in (G\times X)\times (G\times X)\mid x\in X_{g^{-1}h} \ \text{and} \ \alpha_{h^{-1}g}(x)=y\right\}
\]
is the equivalence relation $\sim$ and $p_1,p_2$ are the (restrictions of the) canonical projections. One may check that the assignments
\begin{gather*}
\varphi:R \to G \times (G\bul X), \quad ((g,x),(h,y)) \mapsto (h,(h^{-1}g,x)), \\
\psi:G \times (G\bul X)\to R, \quad (m,(n,z))\mapsto ((mn,z),(m,n\cdot z)),
\end{gather*}
are well-defined and each other inverses, making the following diagram
\[
\begin{gathered}
\xymatrix @C=60pt {
R \ar@<+0.5ex>[r]^-{p_1} \ar@<-0.5ex>[r]_-{p_2} \ar@<+0.4ex>[d]^{\varphi} & G \times X \ar@{=}[d] \\
G \times (G\bul X) \ar@<+0.5ex>[r]^-{(\mu \times X)\circ (G\times \iota)} \ar@<-0.5ex>[r]_-{G \times \alpha} \ar@<+0.4ex>[u]^{\psi}  & G \times X
}
\end{gathered}
\]
to commute sequentially. Hence, it turns out that $Y$ together with the obvious projection $p:G\times X\to Y,\, (g,x)\mapsto [g,x],$ is the coequalizer in $\Set$ of the pair \eqref{eq:sets}.
\end{proof}

Lemma \ref{constructionpreglob} and Proposition~\ref{prop:globset} suggest that the globalization for a partial comodule could be constructed by considering the equalizer of the corresponding pair \eqref{eq:glob1}. Our main result, Theorem \ref{thm:globalization}, shows that this is indeed the case.

As we henceforth need that a particular equalizer in the category of comodules over the coalgebra $H$ exists, let us recall that this is a rather mild condition. In fact, in \cite{Porst} it is shown that for $\cC$ of the form $\Mod_k$ (for a commutative ring $k$) or ${_A\Mod_A}$ (for a possibly non-commutative ring $A$), the category of comodules over any coalgebra in $\cC$ is complete.
Furthermore, it is well-known that the limit of any given diagram in $\Com^H$ exists whenever the limit of the same diagram in $\cC$ exists and the functor $-\ot H\ot H:\cC\to \cC$ preserves it. In particular, 
\begin{center}
$\Com^H$ is complete if $\cC$ is complete and $H$ is a flat object in $\cC$,
\end{center} 
that is, when the endofunctor $-\ot H$ preserves limits. These observations can be deduced from, for example, \cite[Proposition 4.3.2]{BorII}.

The advantage of the last case is that limits can be computed in the underlying category $\cC$.  Examples of such categories are $(\Set,\times,\{*\})$, $(\op{\Set},\times,\{*\})$, $(\vectk,\otimes_\K,\K)$, $(\op{\vectk},\otimes_\K,\K)$ and $(\Calgk,\otimes_\K,\K)$ where $\K$ is a field, or the category $(\Rmod{k},\otimes_k,k)$ of (symmetric) modules over a commutative ring $k$, provided that the coalgebra $(H,\Delta,\varepsilon)$ is such that $H$ is flat as $k$-module. 
The category $(\op{\Top},\times,\{*\})$ is an example as well, provided that the monoid $(H,\mu,u)$ in $\Top$ is such that $H$ is locally compact Hausdorff (a sufficient condition to have that $- \times H$ preserves colimits in $\Top$) or that $H$ is a topological group. The explicit globalization for all these cases will be treated separately, in details, in \cite{Saracco-Vercruysse1, Saracco-Vercruysse2}.

\begin{theorem}\label{thm:globalization}
Let $H$ be a coalgebra in the monoidal category with pushouts $\cC$. 
Then a geometric partial $H$-comodule $X=(X,X\bul H,\pi_X,\rho_X)$ is globalizable if and only if 
\begin{enumerate}[label=(\Roman*),ref=(\Roman*),leftmargin=1cm]
\item\label{item:glob1} the equalizer $(Y_X,\kappa)$ of the pair \eqref{eq:glob1} exists in $\com{H}$ and
\item\label{item:glob3} the commutative diagram 
\begin{equation}\label{eq:GXglob}
\begin{gathered}
\xymatrix @!0 @R=23pt @C=60pt {
 & Y_X \ar[dr]^-{\kappa} \ar@{->>}[dl]_-{(X\ot \varepsilon)\circ \kappa} & \\
 X \ar[dr]_-{\rho_X} & & X\otimes H \ar@{->>}[dl]^-{\pi_X} \\
 & X\bul H & 
}
\end{gathered}
\end{equation}
is a pushout diagram in $\cC$. 
\end{enumerate}
Moreover, under these equivalent conditions $Y_X$ is the globalization of $X$ and this globalization is co-generated by $X$ as a global $H$-comodule.
\end{theorem}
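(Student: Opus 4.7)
The strategy is to use Lemma~\ref{constructionpreglob} as the fundamental bridge between the two sides of the equivalence. That lemma already reformulates morphisms of geometric partial comodules $g:\cI(Y)\to X$ as $H$-colinear maps $f:Y\to X\ot H$ equalizing the parallel pair \eqref{eq:glob1}, via $f=(g\ot H)\circ\delta_Y$ and $g=(X\ot\varepsilon)\circ f$. Under this bijection, the universal property \ref{item:GL3} of a globalization becomes precisely the universal property of the equalizer of \eqref{eq:glob1} in $\com{H}$, which is exactly condition \ref{item:glob1}. Thus the equivalence will essentially be a translation, and the pushout condition \ref{item:glob3} will correspond to \ref{item:GL2}.

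For sufficiency, assuming \ref{item:glob1} and \ref{item:glob3}, I would define $p\coloneqq(X\ot\varepsilon)\circ\kappa:Y_X\to X$. Applying Lemma~\ref{constructionpreglob} to the equalizing map $\kappa$ immediately delivers \ref{item:GL0}. A short calculation using $H$-colinearity of $\kappa$ and counitality of $\Delta$ yields $(p\ot H)\circ\delta_{Y_X}=(X\ot\varepsilon\ot H)\circ(X\ot\Delta)\circ\kappa=\kappa$, so that diagram \eqref{eq:GXglob} coincides with the canonical diagram \eqref{eq:globcom} for $Y_X$; hypothesis \ref{item:glob3} is then exactly \ref{item:GL2}. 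For \ref{item:GL3}, given any global comodule $(Z,\delta')$ and a morphism of partial comodules $q:\cI(Z)\to X$, Lemma~\ref{constructionpreglob} produces the $H$-colinear equalizing map $(q\ot H)\circ\delta'$, which factors uniquely through $\kappa$ via some $\eta:Z\to Y_X$ in $\com{H}$; applying $X\ot\varepsilon$ gives $p\circ\eta=q$, and uniqueness of $\eta$ follows from the naturality of the bijection in Lemma~\ref{constructionpreglob}.

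Conversely, for necessity, suppose $(Y,\delta,p)$ is a globalization of $X$ and set $\kappa\coloneqq(p\ot H)\circ\delta$. By \ref{item:GL0} and Lemma~\ref{constructionpreglob}, $\kappa$ equalizes \eqref{eq:glob1}. To check it \emph{is} the equalizer in $\com{H}$, take any $H$-colinear $f:Z\to X\ot H$ equalizing the pair: Lemma~\ref{constructionpreglob} converts $f$ into a morphism of partial comodules $(X\ot\varepsilon)\circ f:\cI(Z)\to X$, which by \ref{item:GL3} factors uniquely as $p\circ\eta$ for some $H$-colinear $\eta:Z\to Y$, and a direct computation exploiting $H$-colinearity of $f$ produces $\kappa\circ\eta=f$; uniqueness of $\eta$ propagates back through the same bijection and \ref{item:GL3}. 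This establishes \ref{item:glob1} with $Y_X=Y$, and then \ref{item:glob3} is just \ref{item:GL2}. Finally, co-generation of $Y_X$ by $X$ is automatic, since the equalizer inclusion $\kappa=(p\ot H)\circ\delta_{Y_X}$ is a monomorphism in $\com{H}$, which is the very definition given in Definition~\ref{def:cogen}. The only delicate point I foresee is the bookkeeping of uniqueness clauses: the equalizer gives uniqueness of $H$-colinear factorizations through $\kappa$, while \ref{item:GL3} gives uniqueness of global-comodule morphisms compatible with $p$, and one must consistently use Lemma~\ref{constructionpreglob} in the right direction to identify these.
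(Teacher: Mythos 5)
Your proposal is correct and follows essentially the same route as the paper: both arguments use Lemma~\ref{constructionpreglob} to identify the universal property \ref{item:GL3} with the universal property of the equalizer of \eqref{eq:glob1}, and both observe that $(p\ot H)\circ\delta_{Y_X}=\kappa$ so that \eqref{eq:GXglob} coincides with \eqref{eq:globcom} and condition \ref{item:glob3} becomes \ref{item:GL2}. The only detail you gloss over is that Definition~\ref{def:cogen} presupposes that $(X\ot\varepsilon)\circ\kappa$ is an epimorphism, which the paper deduces from Lemma~\ref{lem:hihi} via the pushout condition before concluding co-generation.
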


\begin{proof}
Observe that, by definition, $X$ is globalizable if and only if there exists a universal arrow $\big((Y,\delta),p\big)$, in the sense of \cite[\S{III.1}]{MacLane}, from $\cI$ to $X$ (conditions \ref{item:GL0} and \ref{item:GL3}) such that \ref{item:GL2} holds. Therefore, if $X$ is globalizable then the assignment
\begin{align*}
\com{H}(Z,Y) & \to \gparcom{H}(\cI(Z),X), \\ 
\eta & \mapsto p \circ \eta,
\end{align*}
is bijective. Hence, by Lemma \ref{constructionpreglob}, $Y$ with $\kappa \coloneqq (p \ot H) \circ \delta$ is the equalizer of $\rho_X \ot H$ and $(\pi_X \ot H) \circ (X \ot \Delta)$ in $\com{H}$ and so \ref{item:glob1} holds. 
Moreover, since $(X\ot \varepsilon)\circ \kappa=p$ and $\kappa=(p\ot H)\circ \delta$, diagram \eqref{eq:GXglob} coincides with diagram \eqref{eq:globcom}, which is a pushout by \ref{item:GL2}.

Conversely, if the equalizer $\big((Y_X,\delta),\kappa\big)$ of \eqref{eq:glob1} exists in $\com{H}$ then the assignment
\begin{align*}
\com{H}(Z,Y_X) & \to \{f\in \com{H}(Y_X,X\ot H) \,|\, (\rho_X\ot H)\circ f=(\pi_X\ot H)\circ (X\ot \Delta)\circ f\} \\ 
\eta & \mapsto \kappa \circ \eta,
\end{align*}
is bijective and hence, by Lemma~\ref{constructionpreglob} again, we have that $\epsilon \coloneqq (X\ot \varepsilon)\circ \kappa: \cI(Y_X) \to X$ is a universal arrow from $\cI$ to $X$ (i.e., \ref{item:GL0} and \ref{item:GL3} hold).
Since property \ref{item:glob3} in the statement of the theorem is exactly axiom \ref{item:GL2}, $Y_X$ is a globalization of $X$.

For the last statement, it follows from the above that the globalization of $X$ is given by $Y_X$. Moreover, since $\kappa=(\epsilon\ot H)\circ \delta:Y_X\to X\ot H$ is an equalizer, it is a monomorphism, while $\epsilon=(X\ot \varepsilon)\circ \kappa: Y_X \to X$ is an epimorphism by Lemma \ref{lem:hihi}. 
Therefore, by Definition \ref{def:cogen}, $Y_X$ is co-generated by $X$ as global comodule.
\end{proof}

It is of fundamental importance to realize that the necessary and sufficient conditions of Theorem \ref{thm:globalization} are not always satisfied in general, contrarily to what happens for partial actions of groups. In the next example, we show a case where they fail to be fulfilled. 

\begin{example}\label{ex:NO}
Let $\cC=\algk$, the category of algebras over a field $\K$. Take $H\coloneqq \K\left[x\right]$, the monoid bialgebra over $\N$ with unit $u:\K\to \K[x]$, $\Delta(x) = x\otimes x$ and $\varepsilon(x) = 1$, and take $A\coloneqq \K$ itself. Set $A\bul H\coloneqq \K\left[y^{\pm1}\right]$. The canonical inclusion $\pi:\K[x] \to \K\left[y^{\pm1}\right], x \mapsto y$, is an epimorphism of algebras. Therefore, the cospan
\[
\xymatrix @!0 @C=55pt @R=23pt{
\K \ar[dr]_-{u'} & & \K[x] \ar@{->>}[dl]^-{\pi} \\
 & \K\left[y^{\pm1}\right] & 
}
\] 
is a partial comodule datum in $\algk$, where $u': \K\to \K\left[y^{\pm 1}\right]$ is the unit of $\K\left[y^{\pm 1}\right]$.  We claim that $\left(\K,\K[y^{\pm1}],\pi,u'\right)$ is, in fact, a geometric partial comodule. 
First of all, observe that $\K\left[y^{\pm 1}\right]$ is the group Hopf algebra $\K[\Z]$ with counit $\varepsilon'$ and comultiplication $\Delta'$ and that $\pi:\K[x] \to \K[y^{\pm 1}]$ is a bialgebra morphism. This means that $\rho \coloneqq u : \K \to \K[y^{\pm 1}]$ is simply the extension of scalars of $\delta \coloneqq u : \K \to \K[x]$ along $\pi$ (and this intuitively justifies why it is coassociative and counital). 
Then, to compute $(A \bul H) \bul H$ one observes that if $B$ is a $\K$-algebra and $f:\K[y^{\pm 1}] \to B$, $g:\K[y^{\pm1}] \otimes \K[x] \to B$ are morphism of algebras such that $g\circ (u \otimes \K[x]) = f \circ \pi$ then: \begin{enumerate*}[label=(\roman*)] \item $g(1 \ot x) = f(y) \in B^{\times}$, the invertible elements in $B$, and hence there exists a unique $\tilde{g}:\K[y^{\pm 1}] \to B$ of $\K$-algebras extending $g \circ (u \ot \K[x])$ and \item since $g(1 \ot x) = \tilde{g}(y)$ and $g(y \ot 1)$ clearly commute in $B$, there exists a unique algebra morphism $\varphi: \K[y^{\pm 1}] \otimes \K[y^{\pm 1}] \to B$ such that $\varphi(y \ot 1) = g(y \ot 1)$ and $\varphi(1 \ot y) = \tilde{g}(y)$. \end{enumerate*} It follows that $\big(\K[y^{\pm 1}] \otimes \K[y^{\pm 1}], u'\ot \K[y^{\pm1}], \K[y^{\pm1}] \ot \pi\big)$ is the pushout of $(\pi, u \otimes \K[x])$ in $\algk$. On the other hand, to compute $A \bul (H \bul H)$ one observes that if $B$ is a $\K$-algebra with $\K$-algebra morphisms $f:\K[y^{\pm 1}] \to B$, $g:\K[y^{\pm1}] \otimes \K[x] \to B$ such that $f \circ \pi = g \circ (\pi \ot \K[x]) \circ \Delta$ then: \begin{enumerate*}[label=(\roman*)] \item since $g(y \ot 1)g(1 \ot x) = g(\pi(x) \ot x) = f(y) \in B^{\times}$, $g(1 \ot x)$ is invertible in $B$ too, and hence there exists a unique morphism of $\K$-algebras $\tilde{g}:\K[y^{\pm 1}] \to B, y \mapsto g(1 \ot x),$ and \item since $g(1 \ot x) = \tilde{g}(y)$ and $g(y \ot 1)$ clearly commute in $B$, there exists a unique algebra morphism $\varphi: \K[y^{\pm 1}] \otimes \K[y^{\pm 1}] \to B$ such that $\varphi(y \ot 1) = g(y \ot 1)$ and $\varphi(1 \ot y) = \tilde{g}(y)$. \end{enumerate*} This implies that $\big(\K[y^{\pm1}] \ot \K[y^{\pm1}],\Delta',\K[y^{\pm1}] \ot \pi\big)$ is the pushout of $\big(\pi, (\pi \ot \K[x]) \circ \Delta\big)$ in $\algk$. 
Therefore, $\theta = \id$ and coassociativity and counitality are given by $\Delta'(1) = 1 \ot 1$ and $\varepsilon'(1) = 1$. 
Summing up, $\left(\K,\K\left[y^{\pm 1}\right],\pi,u'\right)$ is a geometric partial $\K[x]$-comodule in $\algk$. 
However, 
\[
Y = \mathsf{Eq}\big(u \ot\K[x], (\pi\ot\K[x])\circ \Delta\big) = \left\{p(x)\in\K[x]~\left|~ 1\ot p(x) = \sum_{i}p_iy^i\ot x^i\right.\right\} = \K
\]
and clearly 
\[
\xymatrix @!0 @C=50pt @R=20pt{
 & \K \ar@{=}[dl] \ar[dr]^-{u} & \\
\K \ar[dr]_-{u} & & \K[x] \ar@{->>}[dl]^-{\pi} \\
 & \K\left[y^{\pm 1}\right] & 
}
\] 
cannot be a pushout diagram. Notice also that working instead with $\cC = \Calgk$, the category of commutative $\K$-algebras, the same argument leads to the same conclusion.
\end{example}

Since Theorem \ref{thm:globalization} shows that the globalization, whenever it exists, must be obtained as the equalizer of \eqref{eq:glob1}, we get an obstruction for its existence in the category of (commutative) algebras.

\begin{corollary}[of Theorem \ref{thm:globalization}]\label{obstruction}
In the categories $\algk$ and $\Calgk$ of (commutative) algebras over a field $\K$, a general globalization for geometric partial comodules does not exist.
\end{corollary}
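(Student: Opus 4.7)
The plan is to deduce the corollary directly from Theorem~\ref{thm:globalization} applied to the specific geometric partial comodule constructed in Example~\ref{ex:NO}. Since Theorem~\ref{thm:globalization} characterizes globalizability by the existence of the equalizer of the pair~\eqref{eq:glob1} in $\Com^H$ together with the pushout property~\eqref{eq:GXglob}, it suffices to exhibit one geometric partial comodule in $\algk$ (and in $\Calgk$) for which at least one of these two conditions fails.

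First, I would invoke Example~\ref{ex:NO} to produce the geometric partial $\K[x]$-comodule $(\K,\K[y^{\pm 1}],\pi,u')$ in $\algk$, where the partial coaction factors through the localization $\pi \colon \K[x] \to \K[y^{\pm 1}]$. The key computation from the example already verifies that this is a genuine geometric partial comodule (the relevant pushouts coincide and the canonical map $\theta$ is an identity). Next, I would compute the equalizer $Y$ of the pair $(u\otimes \K[x],\,(\pi\otimes \K[x])\circ\Delta)$ in $\Com^{\K[x]}$; since $\K$ is the unit of the monoidal category and limits of $\K[x]$-comodules in $\algk$ that exist can be computed on the underlying algebras when $\K[x]$ is flat, this equalizer reduces to the sub-algebra of $\K[x]$ of elements $p(x)$ satisfying $1\otimes p(x)=\sum_i p_i y^i \otimes x^i$, which forces $p$ to be constant. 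Hence $Y=\K$.

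Given $Y=\K$, the candidate square~\eqref{eq:GXglob} degenerates to the diagram displayed at the end of Example~\ref{ex:NO}, whose apex under the pushout in $\algk$ would have to equalize the two ways of going around. But the pushout of $u\colon \K\to \K[x]$ along $\id_\K$ is simply $\K[x]$, whereas the prescribed comparison morphism sends it to $\K[y^{\pm 1}]$, and $\pi\colon \K[x]\to\K[y^{\pm 1}]$ is not an isomorphism. Therefore the square is not a pushout, condition~\ref{item:glob3} of Theorem~\ref{thm:globalization} fails, and the geometric partial comodule is not globalizable. The same argument, performed verbatim inside $\Calgk$, produces the same conclusion since all objects and morphisms involved are commutative. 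I do not expect any genuine obstacle here: the substantive work has been done in Example~\ref{ex:NO} and in Theorem~\ref{thm:globalization}, and the corollary is a one-line consequence. The only care needed is to confirm that the equalizer computed in $\algk$ (or $\Calgk$) agrees with the equalizer in $\Com^{\K[x]}$, which is immediate because the forgetful functor to $\vectk$ preserves equalizers and $\K[x]$ is $\K$-flat.
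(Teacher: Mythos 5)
Your proposal is correct and follows exactly the paper's own route: the corollary is deduced from Theorem~\ref{thm:globalization} applied to the geometric partial comodule of Example~\ref{ex:NO}, whose equalizer $Y=\K$ exists but fails to make diagram~\eqref{eq:GXglob} a pushout (since the pushout of $u$ along $\id_\K$ is $\K[x]$ and $\pi$ is not an isomorphism). The extra details you supply are precisely the content of Example~\ref{ex:NO}, so there is nothing to add.
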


\begin{proof}
This follows directly from Theorem~\ref{thm:globalization} in combination with Example~\ref{ex:NO}.
\end{proof}

A second important consequence of Theorem \ref{thm:globalization} is that the globalization construction is, in fact, functorial and it provides a right adjoint to the inclusion functor $\cI$.

\begin{corollary}[of Theorem \ref{thm:globalization}]\label{cor:adjunction}
Every global comodule is globalizable as geometric partial comodule. That is, the functor $\cI:\com{H}\to \gparcom{H}$ corestricts to a fully faithful functor
$$\cJ:\com{H}\to \gparcom{H}_{gl}.$$
Moreover, the assignment $X\mapsto Y_X$ induces a functor
\[
\cG:\gparcom{H}_{gl}\to \com{H}
\]
which is right adjoint to the fully faithful functor $\cJ:\com{H}\to\gparcom{H}_{gl}$. 
\end{corollary}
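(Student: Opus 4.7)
The plan is to treat the three assertions of the corollary in sequence: globalizability of every global comodule, full faithfulness of $\cJ$, and the adjunction $\cJ \dashv \cG$.

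For the first assertion, given a global comodule $(Y,\delta_Y)$, I would apply Theorem~\ref{thm:globalization} to the associated geometric partial comodule $\cI(Y)=(Y,Y\ot H,\id,\delta_Y)$ and verify that $(Y,\delta_Y)$ itself, together with $\kappa=\delta_Y$, is its globalization. Concretely, the pair \eqref{eq:glob1} specializes to $\delta_Y\ot H$ and $Y\ot\Delta$, and coassociativity shows that $\delta_Y\colon Y\to Y\ot H$ equalizes them. For the universal property in $\com{H}$, given any $f\colon Z\to Y\ot H$ in $\com{H}$ with $(\delta_Y\ot H)\circ f=(Y\ot\Delta)\circ f$, one defines $g\coloneqq (Y\ot\varepsilon)\circ f$ and checks, using the counit axiom and the fact that $\delta_Y$ is a split monomorphism (with retraction $Y\ot\varepsilon$), that $\delta_Y\circ g=f$, that $g$ is $H$-colinear, and that this factorization is unique. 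Hence condition \ref{item:glob1} holds. Condition \ref{item:glob3} is immediate: diagram \eqref{eq:GXglob} for $X=\cI(Y)$ reduces to a trivial square, because $(Y\ot\varepsilon)\circ\delta_Y=\id_Y$ and $\pi_{\cI(Y)}=\id$. Therefore $\cJ$ is well-defined, and since the text already states that $\cI$ is a fully faithful embedding of $\com{H}$ into $\gparcom{H}$, its corestriction $\cJ$ is fully faithful too.

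For the second assertion, functoriality of $\cG$ is forced by the universal property \ref{item:GL3}. Given a morphism $f\colon X\to X'$ in $\gparcom{H}_{gl}$ with counits $\epsilon_X\colon\cI(Y_X)\to X$ and $\epsilon_{X'}\colon\cI(Y_{X'})\to X'$, the composite $f\circ\epsilon_X\colon\cI(Y_X)\to X'$ is a morphism of geometric partial comodules from a global one to $X'$. By \ref{item:GL3} applied to $X'$, it factors uniquely through $\epsilon_{X'}$ as $\epsilon_{X'}\circ\cG(f)$ for a unique $H$-colinear $\cG(f)\colon Y_X\to Y_{X'}$. Uniqueness of such factorizations yields $\cG(\id_X)=\id_{Y_X}$ and $\cG(g\circ f)=\cG(g)\circ\cG(f)$ in the standard way.

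For the third assertion, the adjunction $\cJ\dashv\cG$ follows at once from the same universal property together with Lemma~\ref{constructionpreglob}. Indeed, the latter provides a bijection
\[
\gparcom{H}(\cJ(Z),X)\;\cong\;\{\,f\in\com{H}(Z,X\ot H)\mid (\rho_X\ot H)\circ f=(\pi_X\ot H)\circ(X\ot\Delta)\circ f\,\},
\]
natural in both $Z\in\com{H}$ and $X\in\gparcom{H}_{gl}$, while Theorem~\ref{thm:globalization} identifies the right-hand side with $\com{H}(Z,Y_X)=\com{H}(Z,\cG(X))$ via postcomposition with the equalizer $\kappa_X$. Composing gives a natural bijection $\gparcom{H}(\cJ(Z),X)\cong\com{H}(Z,\cG(X))$, which is exactly the hom-set formulation of an adjunction with counit $\epsilon_X\colon\cJ\cG(X)\to X$. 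Equivalently, \ref{item:GL3} says that $\epsilon_X$ is a universal arrow from $\cJ$ to $X$, and \cite[\S{}IV.1]{MacLane} converts this into a right adjoint. The main delicate point in the whole argument is the verification of condition \ref{item:glob1} for $\cI(Y)$ itself, as all subsequent steps then reduce to direct applications of universal properties already established.
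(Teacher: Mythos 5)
Your proposal is correct and follows essentially the same route as the paper: both reduce the adjunction to the existence of a universal arrow from $\cI$ to each globalizable $X$ (via Lemma~\ref{constructionpreglob} together with MacLane's universal-arrow criterion for right adjoints). The only divergence is in the first assertion, where the paper simply observes that $\id_Y$ satisfies \ref{item:GL0}--\ref{item:GL3} directly, whereas you verify conditions \ref{item:glob1} and \ref{item:glob3} of Theorem~\ref{thm:globalization} using the split-equalizer property of $\delta_Y$ (a fact the paper records separately in Remark~\ref{rem:adjs}); both are equally valid.
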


\begin{proof}
For any global comodule $Y$, the identity morphism $\id:Y\to Y$ satisfies all axioms \ref{item:GL0}--\ref{item:GL3} and hence $Y$ is the globalization of $\cI(Y)$. Thus, the image of the fully faithful functor $\cI:\com{H}\to \gparcom{H}$ lies in the full subcategory of globalizable partial comodules.

Moreover for any globalizable partial comodule $X$, axioms \ref{item:GL0} and \ref{item:GL3} entail that we have a (global) comodule $\cG(X) \coloneqq Y$ and a universal arrow $p:\cI(Y) \to X$ from $\cI$ to $X$, which by Theorem~\ref{thm:globalization} we can realize as the equalizer $(Y_X,\kappa)$ of \eqref{eq:glob1} and as the morphism $\epsilon_X \coloneqq (X \ot \varepsilon) \circ \kappa $, respectively. Therefore, by \cite[\S{IV.1}, Theorem 2(iv)]{MacLane}, $X\mapsto Y_X$ is the object function of a functor $\cG:\gparcom{H}_{gl}\to \com{H}$ which is right adjoint to $\cJ$.
\end{proof}

\begin{remarks}\label{rem:adjs}
\begin{enumerate}[label=(\alph*),leftmargin=0.7cm]
\item For any global $H$-comodule $(Y,\delta)$ it is well-known that the following diagram is an absolute equalizer
$$\xymatrix @C=40pt{Y \ar[r]^-\delta & Y\ot H \ar@<.5ex>[r]^-{\delta\ot H} \ar@<-.5ex>[r]_-{X\ot \Delta} & Y\ot H\ot H}.$$
As this equalizer is exactly the equalizer of \eqref{eq:glob1} applied the case $X=\cJ(Y)$, we find that $Y\cong \cG\cJ(Y)$ and this isomorphism describes the unit of the adjunction $(\cJ,\cG)$, which reconfirms that $\cJ$ is a fully faithful functor. On the other hand, for any globalizable partial comodule $X$ the counit $\epsilon_X:\cJ\cG(X)\to X$ of the adjunction is given exactly by 
$$\epsilon_X=(X\ot \varepsilon)\circ \kappa,$$
where $\kappa$ is the equalizer of \eqref{eq:glob1}, and which is a morphism of $\gparcom{H}$ by Lemma \ref{constructionpreglob}. 

\item The conclusion of Corollary \ref{cor:adjunction} 
can be rephrased by saying that $\com{H}$ is a coreflexive subcategory of $\gparcom{H}_{gl}$, because it is a full subcategory whose inclusion functor admits a right adjoint, and that $Y_X$ is the coreflector in $\com{H}$ of $X$.
\end{enumerate}
\end{remarks}

We conclude this subsection with a remarkable result, showing that the adjunction of Corollary \ref{cor:adjunction} provides a splitting of the classical free-forgetful adjunction for global comodules.
By carefully inspecting the proof of \cite[Proposition 2.20]{JoostJiawey}, one realizes that if $V \ot \varepsilon: V \ot H\to V$ is an epimorphism for every object $V$ in $\cC$, then the forgetful functor $U:\gparcom{H}\to \cC, (X,X\bul H,\pi_X,\rho_X)\mapsto X$, admits a right adjoint $T$ given by the so-called \emph{trivial partial comodule construction}. Namely, for every $V\in\cC$ one puts $V\bul H=V$, $\pi_V=V\otimes \varepsilon$ and $\rho_V = \id_V$. 
This makes $(V,V\bul H,\pi_V,\rho_V)$ a geometric partial $H$-comodule. 
The next proposition tells that the trivial partial comodule structure on an object $V$ in $\cC$ is always globalizable and that its globalization is exactly the usual free comodule over $V$, supporting the fact that geometric partial comodules and the trivial-forgetful adjunction are a refinement of usual comodules and the well-known free-forgetful adjunction.

\begin{proposition}\label{prop:adj}
Assume that $V \ot \varepsilon:V \ot H \to V$ is an epimorphism in $\cC$ for every object $V$.
With notations as above, the free-forgetful adjunction between $\cC$ and $\com{H}$ factors through the category of globalizable geometric partial comodules as in the following diagram, where the inner and outer triangles commute.
\[
\xymatrix @=17pt{
\cC 
\ar@<-.4ex>[dr]_-T \ar@<.4ex>[rr]^-{-\ot H} && {\com{H}} \ar@<-.4ex>[dl]_(0.55){\cJ} \ar@<.4ex>[ll]^{}\\
&\gparcom{H}_{gl} \ar@<-.4ex>[ur]_-{\cG} \ar@<-.4ex>[ul]_(0.45){U}  
}
\]
\end{proposition}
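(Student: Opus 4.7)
The plan is to decompose the claim into three verifications:
(a) the trivial partial comodule functor $T$ factors through $\gparcom{H}_{gl}\hookrightarrow \gparcom{H}$;
(b) the triangle of right adjoints commutes, i.e.\ the forgetful functor $\cF:\com{H}\to\cC$ satisfies $\cF = U\circ \cJ$; and
(c) the triangle of left adjoints commutes, i.e.\ $-\ot H \cong \cG\circ T$ as functors. Once these are established, both the inner and the outer triangles in the diagram commute. Verification (b) is immediate: $\cJ$ is the corestriction of the embedding $\cI:\com{H}\hookrightarrow\gparcom{H}$, which preserves the underlying object of a comodule, while $U$ returns this underlying object; hence $U\circ \cJ = \cF$ tautologically.

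The key verification is (a), which I would settle by applying Theorem~\ref{thm:globalization} to $X = T(V) = (V,V,V\ot\varepsilon,\id_V)$. The two parallel $H$-colinear arrows $V\ot H\rightrightarrows V\ot H$ of \eqref{eq:glob1} read
\[\rho_X\ot H = \id_{V\ot H} \quad \text{and} \quad (\pi_X\ot H)\circ(V\ot\Delta)=(V\ot\varepsilon\ot H)\circ(V\ot\Delta)=\id_{V\ot H},\]
the last equality being the counit identity $(\varepsilon\ot H)\circ\Delta=\id_H$. Since both arrows collapse to the identity, their equalizer in $\com{H}$ trivially exists and is given by $(V\ot H, V\ot\Delta)$ together with $\kappa = \id_{V\ot H}$, so condition \ref{item:glob1} of Theorem~\ref{thm:globalization} holds. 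Substituting these values into \eqref{eq:GXglob} reduces the diagram to the pushout of $V\ot\varepsilon$ along $\id_{V\ot H}$, which is tautologically a pushout with apex $V$ and legs $\id_V, V\ot\varepsilon$; thus \ref{item:glob3} also holds. Hence $T(V)$ is globalizable and $\cG(T(V))=(V\ot H, V\ot\Delta)$ is the free comodule on $V$.

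Verification (c) then follows from functoriality with essentially no further computation. For any $f:V\to W$ in $\cC$, the morphism $T(f)$ has underlying arrow $f$, and the universal property that defines $\cG(T(f)):V\ot H\to W\ot H$ forces it to coincide with $f\ot H$, because all the relevant $\kappa$-morphisms are identities. Hence $\cG\circ T$ and $-\ot H$ agree on objects and morphisms. The only step requiring genuine computation is the identification of the two arrows of \eqref{eq:glob1} with the identity of $V \ot H$, so this is the main (and mild) obstacle; everything else is a formal unfolding of Theorem~\ref{thm:globalization}, Corollary~\ref{cor:adjunction}, and the definitions of $T$, $U$, $\cJ$, $\cG$.
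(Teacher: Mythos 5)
Your proof is correct and follows essentially the same route as the paper: both reduce the statement to showing that the cofree comodule $(V\ot H, V\ot\Delta)$ is the globalization of the trivial partial comodule $T(V)$, which you do by checking conditions \ref{item:glob1}--\ref{item:glob3} of Theorem~\ref{thm:globalization} (noting both legs of \eqref{eq:glob1} collapse to $\id_{V\ot H}$), while the paper invokes Lemma~\ref{constructionpreglob} directly. The only cosmetic difference is in identifying $\cG\circ T$ with $-\ot H$: you compute $\cG(T(f))=f\ot H$ by hand, whereas the paper deduces the natural isomorphism from uniqueness of the right adjoint to $U\circ\cJ=\cF$.
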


\begin{proof}
In view of Lemma \ref{constructionpreglob}, it is easy to check that $(V \ot H, V \ot \Delta)$ is the globalization of $(V,V,V \ot \varepsilon,\id_V)$. Thus, $T(V)$ is always globalizable.
Obviously, $U\circ \cJ:\com{H}\to \cC$ coincides with the forgetful functor and, as $\cG$ is right adjoint to $\cJ$ and $T$ is right adjoint to $U$, it follows by uniqueness of the right adjoint that $\cG\circ T\cong -\ot H:\cC\to \com{H}$.
\end{proof}

\subsection{Globalization versus global covers}\label{preglob}

By definition, a globalizable partial comodule is induced by a global comodule. Conversely, we can now finally show that any induced partial comodule is globalizable and moreover that there is an equivalence between globalizable partial comodules and minimal proper covers.

\begin{theorem}\label{globcov}
Let $H$ be a coalgebra in $\cC$ for which the equalizer of the pair \eqref{eq:glob1} exists in $\com{H}$ for any geometric partial comodule (e.g.~a coalgebra $H$ for which $\com{H}$ is complete). 

If $X$ is a geometric partial comodule that has been induced by a global comodule, then $X$ is globalizable. In other words the functor $\Ind$ from Proposition~\ref{pr:covers} co-restricts to a functor 
$$\Ind:\Cov^H\to \gparcom{H}_{gl},$$
which has a fully faithful right adjoint ${\sf Gl}$ given by ${\sf Gl}(X)=(\cG(X),X,\epsilon_X)$. Moreover, for any globalizable partial comodule, ${\sf Gl}(X)$ is a minimal proper cover in the sense of Definition \ref{def:minprop} and the functors $\Ind$ and ${\sf Gl}$ induce an equivalence of categories
\[\xymatrix{\Cov^H_{pr,min} \ar@<.5ex>[rr]^-{\Ind} & \sim & \gparcom{H}_{gl} \ar@<.5ex>[ll]^-{\sf Gl}}.\]
\end{theorem}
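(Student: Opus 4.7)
The plan is to establish four successive claims: (i) any partial comodule induced from a global cover is globalizable; (ii) the assignment $X\mapsto (\cG(X),X,\epsilon_X)$ defines a fully faithful right adjoint $\mathsf{Gl}$ to $\Ind$; (iii) $\mathsf{Gl}(X)$ lies in $\Cov^H_{pr,min}$ for every globalizable $X$; and (iv) the resulting adjunction restricts to the announced equivalence.

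For (i), let $(Y,X,p)\in\Cov^H$, set $\bar X\coloneqq \Ind(Y,X,p)$, and invoke Theorem \ref{thm:globalization}. The equalizer $(Y_{\bar X},\kappa)$ of \eqref{eq:glob1} exists by hypothesis, so it remains to verify that \eqref{eq:GXglob} is a pushout. Lemma \ref{constructionpreglob} applied to the partial comodule morphism $p:\cI(Y)\to \bar X$ delivers a unique $H$-colinear $F:Y\to Y_{\bar X}$ with $\kappa\circ F=(p\otimes H)\circ\delta_Y$ and $\epsilon_{\bar X}\circ F=p$. Post-composing the equalizer identity $(\rho_X\otimes H)\circ\kappa=(\pi_X\otimes H)\circ(X\otimes\Delta)\circ\kappa$ with $X\bul H\otimes\varepsilon$ and using the counit axiom for $H$ yields $\rho_X\circ\epsilon_{\bar X}=\pi_X\circ\kappa$. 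Universal properties then exhibit mutually inverse comparison morphisms between $X\bul H$ (the pushout of $(p,(p\otimes H)\circ\delta_Y)$ by Definition \ref{def:induction}) and the pushout $Z$ of $(\epsilon_{\bar X},\kappa)$: the identities $p=\epsilon_{\bar X}\circ F$ and $(p\otimes H)\circ \delta_Y=\kappa\circ F$ produce a map $X\bul H\to Z$, and the commutation just derived yields the opposite comparison $Z\to X\bul H$.

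For (ii), functoriality of $\mathsf{Gl}$ follows from Corollary \ref{cor:adjunction}, where the counit of $\cJ\dashv \cG$ supplies the morphisms $\epsilon_X$ (epimorphisms by Lemma \ref{lem:hihi}). The natural bijection
\[
\gparcom{H}_{gl}(\Ind(Y,X,p),Z)\;\cong\;\Cov^H((Y,X,p),\mathsf{Gl}(Z))
\]
is obtained by composing the bijection of Lemma \ref{constructionpreglob} (which, via precomposition with $p$, identifies morphisms of partial comodules $\bar X\to Z$ with $H$-colinear maps $Y\to Y_Z$ compatible with $p$ and $\epsilon_Z$) with the pushout universal property of $\bar X\bul H$: given $(F,f)$ with $\epsilon_Z\circ F=f\circ p$, the maps $\rho_Z\circ f$ and $\pi_Z\circ(f\otimes H)$ coequalize along the cospan by virtue of the identity $\pi_Z\circ\kappa_Z=\rho_Z\circ\epsilon_Z$ established in (i), and assemble into a morphism $f\bul H$. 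Full faithfulness of $\mathsf{Gl}$ is equivalent to the counit of $\Ind\dashv\mathsf{Gl}$ being invertible, which is immediate: by construction $\Ind(\mathsf{Gl}(X))$ has underlying pushout $(\epsilon_X,\kappa)$, and this is $X\bul H$ by Theorem \ref{thm:globalization}\ref{item:glob3}.

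For (iii), properness of $\mathsf{Gl}(X)$ is automatic, since $\kappa=(\epsilon_X\otimes H)\circ\delta_{Y_X}$ is an equalizer in $\com{H}$ and therefore a monomorphism. Minimality is the main obstacle. Given a proper cover $(Y',X,p')$ and a $\cC$-morphism $q:Y_X\to Y'$ with $\epsilon_X=p'\circ q$, the strategy is to produce an inverse via the equalizer property of $\kappa$. One sets $\phi'\coloneqq (p'\otimes H)\circ\delta_{Y'}$, which is $H$-colinear and monic by properness, and aims to show that $\phi'$ equalizes the pair \eqref{eq:glob1}; after coassociativity this reduces to $\rho_X\circ p'=\pi_X\circ\phi'$, i.e.\ to $p':\cI(Y')\to X$ being a morphism of partial comodules in the sense of Remarks \ref{rem:strict}\ref{item:rems2}. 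The delicate point is to extract this equation from the sole hypothesis $\epsilon_X=p'\circ q$, where $q$ need not be colinear; this is done by combining the analogous identity $\rho_X\circ \epsilon_X=\pi_X\circ\kappa$ on $Y_X$ (obtained in (i)) with the monicity of $\phi'$ and of $\kappa$ inside $X\otimes H$. Once this is achieved, the universal property of $\kappa$ provides a colinear $r:Y'\to Y_X$ with $\kappa\circ r=\phi'$ and hence $\epsilon_X\circ r=p'$; then $\kappa\circ r\circ q=\phi'\circ q$ and the monicity of $\kappa$, $\phi'$ force $r\circ q=\id_{Y_X}$ and $q\circ r=\id_{Y'}$, so $q$ is an isomorphism in $\cC$.

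Finally (iv) is formal. If $(Y,X,p)$ is minimal proper, the unit $(F,\id_X):(Y,X,p)\to\mathsf{Gl}(\Ind(Y,X,p))$ of $\Ind\dashv\mathsf{Gl}$ has $H$-colinear component $F:Y\to Y_{\bar X}$ satisfying $p=\epsilon_{\bar X}\circ F$; since $\mathsf{Gl}(\bar X)$ is a proper cover by (iii), the minimality of $(Y,X,p)$ forces $F$ to be an isomorphism. Conversely, any cover on which the unit is invertible is, up to isomorphism in $\Cov^H$, of the form $\mathsf{Gl}(\bar X)$, hence minimal proper by (iii). Therefore $\Cov^H_{pr,min}$ is exactly the essential image of the fully faithful right adjoint $\mathsf{Gl}$, and the restrictions of $\Ind$ and $\mathsf{Gl}$ form the announced adjoint equivalence.
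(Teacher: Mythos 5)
Your proposal follows the same architecture as the paper's proof: parts (i), (ii) and (iv) reproduce, with welcome extra detail, exactly what the paper does. In particular, your explicit verification in (i) that the inner square is a pushout --- via the identity $\rho_X\circ\epsilon_{\bar X}=\pi_X\circ\kappa$ obtained by applying $X\bul H\ot\varepsilon$ to the equalizer relation, followed by the two mutually inverse comparison morphisms --- is precisely the content behind the paper's ``it follows easily that the inner square is also a pushout'', and your hom-set bijection in (ii) is the paper's appeal to the adjunction $(\cJ,\cG)$ of Corollary \ref{cor:adjunction} made explicit.

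The genuine gap is in (iii), the minimality of ${\sf Gl}(X)$. You correctly identify the crux: from the bare hypothesis $\epsilon_X=p'\circ q$, where $q$ is only a morphism in $\cC$ (not assumed $H$-colinear, and not even epi, since $p'\circ q$ epi forces only $p'$ to be epi), one must deduce that $\phi'=(p'\ot H)\circ\delta_{Y'}$ equalizes the pair \eqref{eq:glob1}, equivalently that $\rho_X\circ p'=\pi_X\circ\phi'$, i.e.\ that $p':\cI(Y')\to X$ is a morphism of geometric partial comodules. But you never carry out this deduction: ``combining the analogous identity on $Y_X$ with the monicity of $\phi'$ and of $\kappa$'' is not an argument, because every identity you can transport from $Y_X$ to $Y'$ passes through $q$, so it only holds after precomposition with $q$, and $q$ cannot be cancelled (it is neither epi nor known to be compatible with the coactions). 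The same difficulty recurs at the end of (iii): to get $r\circ q=\id_{Y_X}$ from the monomorphism $\kappa$ you would need $\kappa\circ r\circ q=\kappa$, that is $\phi'\circ q=(p'\ot H)\circ(q\ot H)\circ\delta_{Y_X}$, which is again a colinearity-type statement about $q$ that is not among your hypotheses; an analogous remark applies to $q\circ r=\id_{Y'}$. For comparison, the paper disposes of minimality in one line by invoking the universal property \ref{item:GL3} of the globalization, an appeal which presupposes exactly the point you flagged as delicate (namely that $p'$ is a morphism of partial comodules so that \ref{item:GL3} applies to the cover $(Y',X,p')$). So the step you deferred with ``once this is achieved'' is the actual mathematical content of the minimality claim, and your proposal leaves it unproved.
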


\begin{proof}
Let $(Y,X,p)$ be a cover and $\Ind(Y,X,p)=(X,X\bul H, \pi_X,\rho_X)$ be the induced partial comodule. By definition, $p:Y\to X$ is a morphism of geometric partial comodules. 
Then, by Lemma~\ref{constructionpreglob} there exists a unique $H$-colinear morphism $\tilde{p}:Y\to Y_X$ such that $\epsilon_X \circ \tilde{p} = p$, where $Y_X$ is the equalizer \eqref{eq:glob1} in $\com{H}$. 
Now consider the following diagram
\[
\xymatrix @!0 @C=60pt @R=23pt{
 & Y \ar[dddl]_-{p} \ar[dddr]^(0.55){\ (p\ot H)\circ \delta_Y} \ar[dd]^{\tilde p} & \\ \\
  & Y_X \ar[dl]_-{\epsilon_X} \ar[dr]^-{\kappa} & \\
X \ar[dr]_-{\rho_X} & & X\ot H \ar@{->>}[dl]^(0.45){\pi_X} \\
 & X\bul H \ar@{}[u]|<<<{\pushout} & 
}
\]
The outer square is a pushout square, because $X$ is induced by $Y$. Since the upper part of the diagram commutes, it follows easily that the inner square is also a pushout square and hence $Y_X$ is a globalization of $X$ by Theorem~\ref{thm:globalization}. This shows that the functor $\Ind$ indeed corestricts to the category of globalizable geometric partial comodules. 

The functor ${\sf Gl}$ is obviously well-defined and the adjunction property follows easily from the adjunction $(\cJ,\cG)$ in Corollary \ref{cor:adjunction}. 
As any globalizable geometric partial comodule is induced by its globalization, we have that $\left(\Ind\circ {\sf Gl}\right)(X)\cong X$, from which it follows that ${\sf Gl}$ is fully faithful. 

If $X$ is a globalizable geometric partial comodule, then we know from Theorem~\ref{thm:globalization} that its globalization $\cG(X)=Y_X$ is co-generated by $X$ as a global comodule, that is, that the cover ${\sf Gl}(X)$ is proper. Moreover, by the universal property \ref{item:GL3} of the globalization $Y_X$, the proper cover ${\sf Gl}(X)$ is also minimal. 

Finally, if $(Y,X,p)$ is a minimal proper cover, then we know from the above that ${\sf Gl}(X)=(Y_X,X,\epsilon_X)$ is also a proper cover and moreover $\epsilon_X \circ \tilde{p} = p$. Then the minimality implies that $\tilde{p}$ is an isomorphism and hence $(Y,X,p)$ and ${\sf Gl}(X)$ are isomorphic covers, from which we deduce that the restriction of the induction functor to minimal proper covers is also fully faithful and so we have the required equivalence of categories.
\end{proof}

\begin{remark}
Observe that the functor $\ms{Gl}: \gparcom{H}_{gl} \to \Cov^H_{pr,min}$ is always well-defined. The original contribution of Theorem \ref{globcov} is the fact that $\ms{Ind}: \Cov^H_{pr,min} \to \gparcom{H}_{gl}$ is well-defined, too.
\end{remark}

\subsection{Conclusions, examples and applications}

We showed that there exists a general procedure to compute the globalization of a geometric partial comodule, whenever this globalization exists (Theorem \ref{thm:globalization}). Our approach also provides an obstruction for its existence in certain categories, such as the category of (commutative) algebras. In forthcoming papers \cite{Saracco-Vercruysse1, Saracco-Vercruysse2} we will show that globalization exists in many cases of interest such as partial actions of monoids on sets, geometric partial coactions in abelian categories, partial comodule algebras and partial (co)representations of Hopf algebras. Several globalization theorems appearing in literature are hence subsumed as particular instances of our results.

To finish this paper, we provide some examples of explicit globalizations of (induced) geometric partial comodules.

\begin{example}\label{ex:global2}
Assume that we are in the situation of Example \ref{ex:global}, that is, that we have a surjective morphism of global comodules $p : Y \to X$. As we have seen, the geometric partial comodule structure induced on $X$ by $Y$ via $p$ is the global one $(X,\delta_X)$. In addition, being global, $(X,\delta_X)$ is already the absolute equalizer of $(\delta_X \ot H, X \ot \Delta)$ and so it follows that the globalization of the induced geometric partial comodule structure is still the starting global comodule structure. 
\end{example}

\begin{example}\label{ex:geometric}
Consider $G\coloneqq (\R,+,0)$ and $S\coloneqq \R$. Then the action $\beta:G\times S \to S, (g,s)\mapsto g+s,$ of $G$ on $S$ by translation can be seen as the action of an affine algebraic group on an affine set. Consider $V\coloneqq \{\pm 1\} = \cZ(X^2-1) \subseteq \R$. Then we can look at the restriction $\alpha$ of $\beta$ to $V$ as in Example \ref{ex:induced}. In this setting,
\[
G \bullet V = \left\{ (g,v) \mid v\in V_{g^{-1}}\right\} = \left\{ (0,\pm1),(2,-1),(-2,1)\right\} = \cZ(Z^2-1,X^2+2XZ) \subseteq \R^2
\]
is an affine set as well and the diagram
\[
\xymatrix @!0 @C=60pt @R=20pt{
V & & G\times V \\
 & G\bullet V \ar[ul]^{\alpha} \ar[ur]_{\subseteq} &
}
\]
is composed by polynomial maps, so that this provides an example of a ``geometric partial action''. Let us show that the globalization of this partial action gives back the whole line.

Passing to the ring of coordinates, we obtain a Hopf algebra $H\coloneqq \R[X]$ (with $X$ primitive) and a geometric partial $H$-comodule structure on the algebra $A\coloneqq \R[Z]/\langle Z^2-1\rangle \eqqcolon \R[z]$ which is given as follows. Set $\R[x,z] \coloneqq {\R[X,Z]}/{\langle Z^2-1,X^2+2XZ \rangle}$,
\[
\pi_A : \R[X] \otimes \R[z] \to \R[x,z], \ \ \begin{cases}X\otimes 1 \mapsto x \\ 1\otimes z \mapsto z \end{cases}
\quad
\text{and}
\qquad
\rho_A : \R[z] \mapsto \R[x,z], \ \ z \mapsto x+z.
\]
Then
\vspace{-5pt}
\[
\xymatrix @!0 @C=55pt @R=25pt{
A \ar[dr]_-{\rho_A} & & H\otimes A \ar@{->>}[dl]^-{\pi_A} \\
 & H\bullet A &
}
\]
is a geometric partial $H$-comodule structure on $A$ in the category of affine algebras. Observe that, since $(A,H\bul A,\pi_A,\rho_A)$ is an induced geometric partial comodule, 
the equalizer $(Y_A,\delta)$ of \eqref{eq:glob1} is the globalization of $A$, by Theorem \ref{thm:globalization} and Theorem \ref{globcov}. 
Consider then the equalizer $Y_A$ of the pair $(H \otimes \rho_A, (H \otimes \pi_A)(\Delta \otimes A))$. Since
\[
\begin{aligned}
& (H \otimes \rho_A)(X \otimes 1) = X\otimes 1, & \qquad  & \big((H \otimes \pi_A)(\Delta \otimes A)\big)(X \otimes 1) = X\otimes 1 + 1 \otimes x, \\ 
& (H \otimes \rho_A)(1\otimes z) = 1\otimes x + 1\otimes z, & \qquad  & \big((H \otimes \pi_A)(\Delta \otimes A)\big)(1\otimes z) =  1\otimes z,
\end{aligned}
\]
it follows that $X\otimes 1 + 1 \otimes z \in Y_A$ and we have a well-defined algebra map $\psi:\R[X] \to Y_A$, $X \mapsto X\otimes 1 + 1 \otimes z$. It can be shown, with a bit of effort, that $\psi$ is an isomorphism.
\end{example}

\begin{example}\label{ex:geometric2}
Analogously to Example \ref{ex:geometric}, consider $G=\sf{SO}(2,\R)$ acting on $\R^2$ and $V= \{a\coloneqq (1,0)\} = \cZ(X-1,Z)$. In this setting, $G\bullet V = \{(I_2,a)\}$ together with the inclusion $G\bullet V\subseteq G\times V$ and the map $G\bullet V \to V, (I_2,a)\mapsto a,$ gives a partial action of $G$ on $V$. By passing to the coordinate rings we find a geometric partial $H$-comodule structure on $\R$, where $H=\R[\sf{SO}(2,\R)]$. Namely, the trivial geometric partial comodule structure
\[
\xymatrix @!0 @C=35pt @R=18pt{
\R \ar@{=}[dr] & & H \ar@{->>}[dl]^-{\varepsilon} \\
 & \R &
}
\]
By Proposition \ref{prop:adj}, the equalizer $Y_\R$ is $H$, which corresponds to the unit circle in $\R^2$.
\end{example}



\end{document}